\setlist[enumerate,1]{label={\upshape(\roman*)}} 
\newtheorem{theorem}{Theorem}[section]
\newtheorem*{theorem*}{Theorem}
\newtheorem{proposition}[theorem]{Proposition}
\newtheorem{lemma}[theorem]{Lemma}
\newtheorem{definition}[theorem]{Definition}
\newtheorem{remark}[theorem]{Remark}
\numberwithin{equation}{section}
\crefname{figure}{Figure}{Figures} 
\crefname{equation}{}{} 
\newcommand{\calm}{\mathcal{M}}
\newcommand{\calx}{\mathcal{X}}
\newcommand{\calc}{\mathcal{C}}
\newcommand{\cald}{\mathcal{D}}
\newcommand{\cale}{\mathcal{E}}
\newcommand{\calf}{\mathcal{F}}
\newcommand{\calv}{\mathcal{V}}
\newcommand{\calg}{\mathcal{G}}
\newcommand{\caln}{\mathcal{N}}
\newcommand{\fre}{\textrm{Fr\'echet} }
\newcommand{\dm}{d_{\mathcal{M}}}
\newcommand{\dmu}{\,\mathrm{d}\mu}
\newcommand{\bbe}{\mathbb{E}}
\newcommand{\bbr}{\mathbb{R}}
\newcommand{\bbp}{\mathbb{P}}
\newcommand{\bbs}{\mathbb{S}}
\newcommand{\bbz}{\mathbb{Z}}
\newcommand{\bbn}{\mathbb{N}}
\newcommand{\rmr}{\mathrm{R}}
\newcommand{\sqn}{\sqrt{n}}
\newcommand{\grad}{\mathrm{grad}}
\newcommand{\hess}{\mathrm{Hess}}
\newcommand{\diff}[1]{\,\mathrm{d}#1}
\newcommand{\cost}{\mathbf{F}}
\newcommand{\gcost}{\mathbf{G}}
\newcommand{\bms}{\mathbf{S}}
\newcommand{\vol}{\mathrm{vol}}
\newcommand{\rlog}[2]{\mathrm{Log}_{#1}(#2)}
\newcommand{\rexp}{\mathrm{Exp}}
\newcommand{\hk}{K(t,x,y)}
\newcommand{\pto}{\xrightarrow{\enskip \mathbb{P}\enskip}}
\newcommand{\asto}{\xrightarrow{\enskip a.s.\enskip}}
\newcommand{\wto}{\xrightarrow{\enskip w\enskip}}
\def\@title{
  \begin{center}
    \fontsize{12}{14.4}\selectfont\bfseries\MakeUppercase \@titletext
  \end{center}
}
\newcommand{\titletext}[1]{\gdef\@titletext{#1}}
\author{Yueqi Cao} 
\affil{Department of Mathematics, KTH Royal Institute of Technology}
\date{}
\begin{document}

\maketitle

\begin{abstract}
    Motivated by Varadhan's theorem, we introduce Varadhan functions, variances, and means on compact Riemannian manifolds as smooth approximations to their Fréchet counterparts. Given independent and identically distributed samples, we prove uniform laws of large numbers for their empirical versions. Furthermore, we prove central limit theorems for Varadhan functions and variances for each fixed $t\ge0$, and for Varadhan means for each fixed $t>0$. By studying small time asymptotics of gradients and Hessians of Varadhan functions, we build a strong connection to the central limit theorem for Fréchet means, without assumptions on the geometry of the cut locus.

    \smallskip
    \noindent \textbf{Keywords.} Varadhan's theorem; Fréchet mean; Heat kernel; Uniform law of large numbers; Central limit theorem  
\end{abstract}


\section{Introduction}

The \fre mean is a natural generalization of the standard mean in Euclidean spaces to arbitrary metric spaces. Suppose $\calm$ is a metric space with distance function $\dm$ and probability distribution $\mu$. The \fre mean is defined as any minimizer of the \fre function
\begin{equation}\label{eq:frechet-F}
F_\mu(x)=\int_\calm\dm^2(x,\xi)\diff{\mu}(\xi).
\end{equation} 
When the metric space $\calm$ is specialized to a Riemannian manifold, the Riemannian structure further endows the \fre mean with rich geometric and probabilistic properties. 
Minimizers of the (localized) \fre function on Riemannian manifolds are also called Karcher means \citep{kendall1990probability,karcher1977riemannian,afsari2011riemannian}.

In statistics, a central problem is to understand the asymptotic behavior of the empirical \fre means on Riemannian manifolds. The foundational work of \cite{bhattacharya2003large, bhattacharya2005large} established key results including laws of large numbers (LLNs) and central limit theorems (CLTs) for \fre means. There has been an extensive literature on the CLTs for \fre means on Riemannian manifolds, including \cite{bhattacharya2008statistics,kendall2011limit,bhattacharya2017omnibus,eltzner2021stability}. However, most existing CLTs require either that the population distribution is supported away from the cut locus, or that the cut locus has a high codimension relative to $\calm$. Due to the existence of cut locus, it is not clear whether the \fre function $F_\mu(x)$ in \eqref{eq:frechet-F} is always twice differentiable. Recently  \cite{hotz2024central} proved a global CLT for \fre means taking into account the complicated geometry of cut locus. Their result  first revealed a ``non-standard'' term in the CLT which is nontrivial when the codimension of the cut locus is one.

Since the main difficulty of establishing a CLT for \fre means lies in the nonsmoothness of the squared Riemannian distance function and the \fre function, a natural question is whether these functions can be approximated by other tractable, smooth functions. The celebrated Varadhan's theorem offers such candidates. 
\begin{theorem*}[\citep{varadhan1967behavior}]
    Let $K(t,x,y)$ be the heat kernel on a complete Riemannian manifold $\calm$. Then for any $x,y\in\calm$, 
    $$
    \lim_{t\to 0^+}-2t\log(K(t,x,y))= \dm^2(x,y),
    $$
    uniformly on any compact sets of $\calm\times\calm$.
\end{theorem*}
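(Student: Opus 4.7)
The plan is to prove the matching bounds
\begin{align*}
\liminf_{t\to 0^+} -2t\log K(t,x,y) &\geq \dm^2(x,y),\\
\limsup_{t\to 0^+} -2t\log K(t,x,y) &\leq \dm^2(x,y),
\end{align*}
each with constants that are uniform over a fixed compact set $\mathcal{K}\subset\calm\times\calm$. I would start with the liminf, which follows from a Gaussian \emph{upper} bound on the heat kernel: for every $\epsilon>0$ there exists $C_\epsilon$ such that
$$
K(t,x,y) \;\leq\; C_\epsilon\, t^{-n/2}\exp\!\left(-\frac{\dm^2(x,y)}{2t+\epsilon}\right)
$$
for all $(x,y)\in\mathcal{K}$ and $t\in(0,1]$. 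Such estimates are classical consequences of Davies' integrated maximum principle or of Li--Yau gradient estimates; on a complete manifold whose Ricci curvature is bounded below on a neighborhood of $\mathcal{K}$ they hold with constants depending only on that neighborhood. Applying $-2t\log$ to both sides, the polynomial prefactor contributes $nt\log t - 2t\log C_\epsilon = o(1)$, while the exponent yields $2t\dm^2(x,y)/(2t+\epsilon)$. Letting first $t\to 0^+$ and then $\epsilon\to 0$ gives the claim uniformly on $\mathcal{K}$.

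The harder direction is the limsup, which demands a matching Gaussian \emph{lower} bound. Locally I would invoke the Minakshisundaram--Pleijel parametrix: there exist smooth functions $u_0, u_1, \ldots$ on a normal neighborhood with $u_0(p,p)\equiv 1$ such that, for $\dm(p,q)$ smaller than the convexity radius,
$$
K(t,p,q) \;=\; (2\pi t)^{-n/2}\exp\!\left(-\frac{\dm^2(p,q)}{2t}\right)\bigl(u_0(p,q) + tu_1(p,q) + O(t^2)\bigr),
$$
which immediately supplies the lower bound when $\dm(p,q)$ is small. For a general pair $(x,y)$ I would fix a minimizing geodesic $\gamma$ joining them and partition it into $N$ arcs whose lengths lie below the (uniform) convexity radius on $\mathcal{K}$. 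The Chapman--Kolmogorov identity
$$
K(t,x,y) \;=\; \int_{\calm^{N-1}} \prod_{i=1}^{N} K(t/N, z_{i-1},z_i)\; d\vol(z_1)\cdots d\vol(z_{N-1}),
$$
with $z_0=x$ and $z_N=y$, lets me restrict each integration to a small ball around $\gamma(i/N)$ and apply the parametrix lower bound in every factor. Since the partition satisfies $\sum_i \dm(\gamma_{i-1},\gamma_i) = \dm(x,y)$, the quadratic exponents add up correctly, and bookkeeping yields $K(t,x,y)\geq c\, t^{-nN/2}\exp(-(\dm^2(x,y)+\epsilon)/(2t))$ for any $\epsilon>0$, hence $-2t\log K(t,x,y)\leq \dm^2(x,y)+\epsilon + o(1)$.

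Uniformity of the limit on $\mathcal{K}$ follows because all the ingredients---the convexity radius, the Minakshisundaram--Pleijel error estimates, and the chaining constants---can be bounded uniformly over a compact set by continuity. The main obstacle I anticipate is the chaining step for the limsup: when $y$ sits near the cut locus of $x$, one must verify that each partitioned arc remains inside a normal neighborhood of some uniformly chosen center, that the parametrix coefficients stay bounded along the chain, and that the integration restricted to small balls still captures enough mass for the lower bound. This is resolved by observing that each chain step involves only the local geometry at an \emph{interior} point of a minimizing geodesic, where no cut locus issues arise, so the cut locus of the endpoints never enters the final estimate.
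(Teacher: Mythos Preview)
The paper does not prove Varadhan's theorem; it is stated in the introduction as a classical result with the citation \citep{varadhan1967behavior} and then used throughout as a black box. There is therefore no proof in the paper to compare your proposal against.

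For what it is worth, your sketch follows the standard route to Varadhan's formula: a Gaussian upper bound (via Li--Yau or Davies) for the $\liminf$ direction, and the Minakshisundaram--Pleijel parametrix chained through Chapman--Kolmogorov for the $\limsup$ direction. This is essentially how the result is proved in the literature. The one place where your bookkeeping is a bit loose is the chaining step: when you restrict each intermediate integration to a ball of radius $r$ around $\gamma(i/N)$, the volume of that ball contributes a factor of order $r^n$ per step, and you need to balance this against the polynomial prefactors $t^{-n/2}$ and the Gaussian penalty for deviating from the geodesic. A clean way to handle this is to take $r$ proportional to $\sqrt{t/N}$ so that the Gaussian integral over each ball is bounded below by a constant; then the exponents $\sum_i \dm^2(z_{i-1},z_i)\cdot N/(2t)$ give $\dm^2(x,y)/(2t)+O(r N/t)=\dm^2(x,y)/(2t)+o(1/t)$ after choosing $N$ appropriately. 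Your remark that interior points of a minimizing geodesic avoid cut-locus pathologies is correct and is indeed what makes the parametrix applicable at every step.
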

This paper takes Varadhan’s theorem as the starting point. Our goal is to construct Fr\'echet-type statistics based on the logarithmic heat kernel on compact Riemannian manifolds and to study their asymptotic properties.

\subsection{Overview of the Main Results}
Suppose $\calm$ is a compact Riemannian manifold with a probability distribution $\mu$. In this paper, we introduce the $t$-Varadhan function, variance, and mean as follows
\begin{equation*}
F^t_\mu(x) = \int_\calm -2t\log(K(t,x,\xi))\diff{\mu}(\xi),\quad V^t_\mu = \min_{x\in\calm} F^t_\mu(x),\quad  x^t_\star \in \mathop{\arg\min}_{x\in\calm} F^t_\mu(x).
\end{equation*}
Throughout the paper we always assume that the (population) $t$-Varadhan mean is unique. Suppose $\xi_1,\ldots,\xi_n$ are i.i.d. samples. We present asymptotic analysis for the empirical $t$-Varadhan function $F^t_n$, variance $V^t_n$, and mean $x^t_n$. Specifically, we prove
\begin{itemize}
    \item Uniform laws of large numbers (\Cref{thm:ulln-vf-function} and \Cref{thm:ulln-vf-mean-var}). Roughly speaking, we show that as $n\to\infty$,
    \begin{equation*}
        \begin{aligned}
            & \sup_{t\in[0,T]}\sup_{x\in\calm}|F^t_n(x)-F^t_\mu(x)|\asto0,\\
            &  \sup_{t\in[0,T]}|V^t_n- V^t_\star|\asto 0,\\
            &\sup_{t\in[0,T]}\dm(x^t_n, x^t_\star)\asto 0;
        \end{aligned}
    \end{equation*}
    \item Central limit theorems (\Cref{thm:clt-vf-function}, \Cref{thm:clt-vf-var}, and \Cref{thm:clt-vf-mean}). Roughly speaking, for each \emph{fixed} $t\ge 0$, we show that as $n\to\infty$,
    \begin{equation*}
        \begin{aligned}
            & \sqn(F^t_n-F^t_\mu)\wto \text{Gaussian process indexed by }\calm,\\
            &  \sqn(V^t_n-V^t_\star)\wto \mathcal{N}(0,\sigma^t),
        \end{aligned}
    \end{equation*}
    and for each \emph{fixed} $t>0$, 
    \begin{equation*}
    \sqn\,\rlog{x^t_\star}{x^t_n}\wto \mathcal{N}(0,\Sigma^t).
    \end{equation*} 
\end{itemize}
The CLT for $t$-Varadhan means is of particular interest. We notice that the covariance matrix $\Sigma^t$ is closely related to the gradient and Hessian of the $t$-Varadhan function. Assuming that $\mu$ is absolutely continuous to the volume measure on $\calm$, we show that
\begin{align}
    & \lim_{t\to 0^+}\grad_x(F^t_\mu) = \bbe_{\xi\sim\mu}[\grad_x(\dm^2(\cdot,\xi))], \tag{\Cref{prop:gradient-converge}}\\
    & \lim_{t\to 0^+}\hess_x(F^t_\mu)  = \bbe_{\xi\sim\mu}[\hess_x(\dm^2(\cdot,\xi))] + J_\mu(x). \tag{\Cref{prop:hessian-converge}}
\end{align}
If in addition there is a compact neighborhood $U$ of the \fre mean $x^0_\star$ such that the Hessians $\hess_{x}(F^t_\mu)$ converge uniformly, we prove that the $t$-Varadhan functions $F^t_\mu$ converge to the \fre function $F^0_\mu$ in $C^2(U)$-norm as $t\to 0^+$, which further leads to a new CLT for \fre means (\Cref{thm:clt-fre-mean}).

\subsection{Related Works}

Using the minimizer of the expected logarithmic heat kernel as a smooth approximation of the classical \fre mean was first introduced by \cite{eltzner2023diffusion}, where they coined the term \emph{diffusion mean}.  They also noted a connection to Varadhan's theorem in \cite[Section 4.1]{eltzner2023diffusion}. Our work  differs from theirs in several key respects: (1) The diffusion mean arises as a generalization of Gaussian maximum likelihood analysis by maximizing the likelihood of a Brownian motion. Our starting point is Varadhan’s theorem,  thereby focusing on the setting of compact Riemannian manifolds; (2) \cite{eltzner2023diffusion} concentrated on the statistical properties of diffusion means. In our work, we develop a systematic framework covering the asymptotic analysis for Varadhan functions, variances, and means; (3)  In general, the diffusion mean is proved to satisfy a strong LLN for fixed $t>0$. Using Varadhan's theorem, we show that a uniform strong LLN holds for all $t\in [0,T]$ on compact Riemannian manifolds; (4) The diffusion mean is proved to satisfy a smeary CLT for fixed $t>0$. Our CLT does not involve smeariness; instead, we provide an explicit expression for the asymptotic covariance matrix, which allows us to build a connection to the CLT for \fre means by analyzing the small time asymptotics of the gradient and Hessian of the Varadhan function. 

Throughout our paper, to keep consistency with the geometric viewpoint of Varadhan’s theorem, we will use the term \emph{Varadhan mean} instead of diffusion mean. 

Our work is closely related to the recent work by \cite{hotz2024central}, where the authors proved a general CLT for Fréchet means which allows the population distribution to have support containing the cut locus. Through a careful geometric analysis of the cut locus, they first identified the ``non-standard'' asymptotic behavior that arises when the cut locus has codimension one. This phenomenon is characterized by a term $J_\mu$ in the Hessian of the \fre function. In our paper, we establish a strong connection to their results by studying the convergence of Hessians of Varadhan functions, which naturally gives rise to the same correction term $J_\mu$. We present explicit computations on circle and torus in \Cref{sec:example}. Although our approach is essentially different from theirs, the resulting expressions of $J_\mu$ coincide with the results given in \cite[Section 2.4]{hotz2024central}.

\section{Setup}

Let $(\calm,g)$ be a compact Riemannian manifold of dimension $m$. Let $\Delta_g$ be the Laplace--Beltrami operator. The heat equation on $\calm$ is given by
\begin{equation*}
\begin{dcases}
\partial_tu(t,x) = \frac{1}{2}\Delta_g u(t,x), & (t,x)\in (0,+\infty)\times\calm, \\
\lim_{t \to 0^+}u(t,x) = f(x), & x\in \calm.
\end{dcases}
\end{equation*}
Let $\vol_\calm$ be the volume measure. The heat kernel $\hk$ is a smooth function on $(0,+\infty)\times\calm\times\calm$ which solves the heat equation by setting
\begin{equation*}
u(t,x) = \int_\calm \hk f(y)\diff{\vol_\calm}(y).
\end{equation*}
Define a parameterized family of functions on $\calm\times\calm$ by
\begin{equation*}
    \cost^t(x,y) = \begin{dcases}
        -2t\log(\hk), & t>0,\\
        \dm^2(x,y), & t=0,
    \end{dcases}
\end{equation*}
where $\dm(\cdot,\cdot)$ is the Riemannian distance function. Varadhan's theorem claims that $\cost^t(x,y)\to \cost^0(x,y)$ uniformly on $\calm\times\calm$ as $t\to 0^+$.

\begin{definition}

Let $\Xi$ be an $\calm$-valued random variable whose associated probability distribution is $\mu$. For any $t\geq 0$, the $t$-Varadhan function is defined as 
\begin{equation}\label{eq:v-func}
    F^t_\mu(x) = \bbe[\cost^t(x,\Xi)].
\end{equation}
The minimal value of \eqref{eq:v-func} is called the $t$-Varadhan variance
\begin{equation}\label{eq:v-var}
    V^t_\star = \min_{x\in\calm} F^t_\mu(x).
\end{equation}
A $t$-Varadhan mean is any minimizer of \eqref{eq:v-func}
\begin{equation}\label{eq:v-mean}
    x_\star^t \in \mathop{\arg\min}_{x\in\calm} F^t_\mu(x).
\end{equation}
\end{definition}
For $t=0$, \eqref{eq:v-func}, \eqref{eq:v-var} and \eqref{eq:v-mean} are exactly the definitions for \fre function, variance and mean.



Suppose $\xi_1,\ldots,\xi_n\in\calm$ are i.i.d. samples of $\Xi$. The corresponding empirical versions of \eqref{eq:v-func}, \eqref{eq:v-var} and \eqref{eq:v-mean} are given by 
\begin{equation*}
    F^t_n(x) = \frac{1}{n}\sum_{i=1}^n\cost^t(x,\xi_i),\quad V^t_n = \min_{x\in\calm} F^t_n(x),\quad x^t_n\in\mathop{\arg\min}_{x\in\calm} F^t_n(x). 
\end{equation*}

Throughout the paper, we always assume that the population $t$-Varadhan mean is unique for each $t\geq 0$. The empirical $t$-Varadhan means are not necessarily unique. We will assume that for each $t\geq 0$ and $n\in\bbn$, a measureable selection $x^t_n$ from the set of minimizers of $F^t_n$ is made, so that $x^t_n$ is a well-defined $\calm$-valued random variable. The existence of a measureable selection is guaranteed by the measurable selection theorem \citep[Theorem 7.34]{shapiro2021lectures}.

\section{Uniform Laws of Large Numbers}\label{sec:consistency}

First of all, we show that Varadhan's theorem implies convergence of Varadhan functions, variances, and means to their \fre counterparts.

\begin{proposition}\label{prop:int-vf}
    Let $\calm$ be a compact Riemannian manifold. Then
    \begin{enumerate}
        \item $\|F^t_\mu- F^0_\mu\|_\infty\to 0$ as $t\to 0^+$.
        \item $V^t_\star\to V^0_\star$ as $t\to 0^+$.
        \item Assuming uniqueness of $t$-Varadhan means, then $\dm(x^t_\star, x^0_\star)\to 0$ as $t\to 0^+$.
    \end{enumerate}
\end{proposition}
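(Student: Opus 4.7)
The plan is to deduce all three statements from Varadhan's theorem via standard uniform-convergence/argmin arguments, exploiting compactness of $\mathcal{M}$ heavily.

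For part (i), the strategy is to push the uniform convergence $\cost^t \to \cost^0$ on the compact set $\mathcal{M}\times\mathcal{M}$ through the integral against $\mu$. Concretely, given $\varepsilon>0$, Varadhan's theorem supplies $\delta>0$ such that $|\cost^t(x,y)-\cost^0(x,y)|<\varepsilon$ for every $(x,y)\in\mathcal{M}\times\mathcal{M}$ and every $t\in(0,\delta)$. Integrating this bound against $\mu$ in the $y$-variable and taking supremum in $x$ yields $\|F^t_\mu-F^0_\mu\|_\infty\le\varepsilon$. Nothing subtle is needed here since $\mu$ is a probability measure.

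Part (ii) is an immediate consequence of (i): for any bounded functions $f,g$ on $\mathcal{M}$ one has $|\min f - \min g|\le\|f-g\|_\infty$, so
\begin{equation*}
|V^t_\star - V^0_\star| \;=\; \bigl|\min_{x\in\mathcal{M}} F^t_\mu(x)-\min_{x\in\mathcal{M}} F^0_\mu(x)\bigr| \;\le\; \|F^t_\mu-F^0_\mu\|_\infty \;\longrightarrow\; 0.
\end{equation*}

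Part (iii) is the classical argmin-continuity argument. I would first observe that $F^0_\mu$ is continuous on $\mathcal{M}$ (it is the integral of the continuous function $\cost^0$ against $\mu$, dominated by $\operatorname{diam}(\mathcal{M})^2$). Fix an arbitrary sequence $t_n\to 0^+$; by compactness of $\mathcal{M}$, the points $x^{t_n}_\star$ admit a subsequence $x^{t_{n_k}}_\star\to x^*\in\mathcal{M}$. Combining (i) with continuity of $F^0_\mu$ gives
\begin{equation*}
F^{t_{n_k}}_\mu\bigl(x^{t_{n_k}}_\star\bigr) \;\longrightarrow\; F^0_\mu(x^*),
\end{equation*}
while by (ii) the left-hand side equals $V^{t_{n_k}}_\star\to V^0_\star = F^0_\mu(x^0_\star)$. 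Hence $F^0_\mu(x^*)=F^0_\mu(x^0_\star)$, and uniqueness of the $0$-Varadhan mean forces $x^*=x^0_\star$. Since every subsequence of $\{x^{t_n}_\star\}$ has a further subsequence converging to $x^0_\star$, the whole net $x^t_\star$ converges to $x^0_\star$ as $t\to 0^+$.

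I do not anticipate a genuine obstacle: the only place care is required is verifying that $F^0_\mu$ is continuous so that the limit $F^{t_{n_k}}_\mu(x^{t_{n_k}}_\star)\to F^0_\mu(x^*)$ actually holds, and this follows at once from uniform convergence in (i) plus continuity of the distance squared.
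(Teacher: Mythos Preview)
Your proposal is correct and follows essentially the same approach as the paper: part (i) via Varadhan's uniform convergence pushed through the probability integral, part (ii) via the $1$-Lipschitz property of the min functional, and part (iii) via compactness plus uniqueness of the Fr\'echet mean. The only cosmetic difference is that the paper phrases (iii) as a proof by contradiction whereas you use the equivalent ``every subsequence has a further subsequence with the same limit'' formulation.
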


\begin{remark}
    Without assumption of uniqueness, it was proved that the sets of diffusion $t$-means ($t$-Varadhan means) converge to the set of \fre means in the sense of Ziezold \citep[Theorem 4.7]{eltzner2023diffusion}. Here we focus on the case of unique means for simplicity. 
\end{remark}

Let $T>0$ be a fixed positive number. By \Cref{prop:int-vf}, the function $(t,x)\mapsto F^t_\mu(x)$ is continuous at each point in $\{0\}\times\calm$. Since the function is smooth on $(0,T]\times\calm$, it is uniformly continuous over the compact domain $[0,T]\times \calm$. Similarly, \Cref{prop:int-vf} also implies the uniform continuity of the function $t\mapsto V^t_\star$ and the map $t\mapsto x^t_\star$ over $[0,T]$.

For each fixed $t\geq 0$ and $x\in\calm$, by the classical law of large numbers (LLN), we have $F^t_n(x)\to F^t_\mu(x)$ almost surely as $n\to\infty$. The following result establishes uniform convergence over $[0,T]\times \calm$, giving a uniform law of large numbers (ULLN) for $t$-Varadhan functions.

\begin{theorem}\label{thm:ulln-vf-function}
    Let $T>0$ be a fixed positive number. Then
    $$
    \sup_{t\in[0,T]}\sup_{x\in\calm}|F^t_n(x)-F^t_\mu(x)|\asto0,\,\text{ as }n\to\infty.
    $$
\end{theorem}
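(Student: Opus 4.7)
The strategy is the standard reduction for uniform laws of large numbers over a compact parameter space: use equicontinuity of the integrand to pass from a supremum to a maximum over a finite net, and then invoke the classical pointwise SLLN at each net point. The parameter space $[0,T]\times\calm$ is compact, which makes this approach viable.

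The first step is to show that $(t,x,y)\mapsto \cost^t(x,y)$ is jointly continuous, and therefore uniformly continuous and bounded, on the compact set $[0,T]\times\calm\times\calm$. For $t>0$ this is immediate from the smoothness and strict positivity of the heat kernel. At $t=0$, if $(t_n,x_n,y_n)\to(0,x_0,y_0)$, the triangle inequality gives
$$|\cost^{t_n}(x_n,y_n)-\dm^2(x_0,y_0)|\le \|\cost^{t_n}-\cost^0\|_\infty+|\dm^2(x_n,y_n)-\dm^2(x_0,y_0)|,$$
and both terms vanish: the first by Varadhan's theorem (which, thanks to compactness of $\calm$, yields convergence uniformly on $\calm\times\calm$), the second by continuity of the squared Riemannian distance.

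The second step is the discretization. Given $\varepsilon>0$, uniform continuity supplies $\delta>0$ such that $|\cost^t(x,y)-\cost^{t'}(x',y)|<\varepsilon$ whenever $|t-t'|+\dm(x,x')<\delta$, simultaneously for every $y\in\calm$. Integrating against $\mu$ and against the empirical measure $\frac{1}{n}\sum_i\delta_{\xi_i}$ yields the equicontinuity estimates $|F^t_\mu(x)-F^{t'}_\mu(x')|<\varepsilon$ and $|F^t_n(x)-F^{t'}_n(x')|<\varepsilon$, the latter uniformly in $n$ and in the random sample. Fix a finite $\delta$-net $\{(t_k,x_k)\}_{k=1}^N\subset[0,T]\times\calm$.

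The third step combines the pointwise SLLN with the finite net. At each $(t_k,x_k)$ the random variables $\cost^{t_k}(x_k,\xi_i)$ are i.i.d.\ and uniformly bounded, so $F^{t_k}_n(x_k)\asto F^{t_k}_\mu(x_k)$; a union bound over the $N$ net points gives $\max_k|F^{t_k}_n(x_k)-F^{t_k}_\mu(x_k)|\asto 0$. For arbitrary $(t,x)$, choosing a nearest net point $(t_k,x_k)$ and applying the triangle inequality with the two equicontinuity bounds yields
$$\sup_{t\in[0,T]}\sup_{x\in\calm}|F^t_n(x)-F^t_\mu(x)|\le 2\varepsilon+\max_{1\le k\le N}|F^{t_k}_n(x_k)-F^{t_k}_\mu(x_k)|,$$
so the almost sure $\limsup$ of the left-hand side is at most $2\varepsilon$; letting $\varepsilon$ run along a sequence tending to zero finishes the argument. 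The only conceptual hurdle is the joint continuity at the boundary $t=0$, where $\cost^t$ changes functional form; this is precisely what Varadhan's theorem supplies. Measurability of the supremum is not an issue, since the integrand is continuous and $[0,T]\times\calm$ is separable, so the supremum is attained over a countable dense subset.
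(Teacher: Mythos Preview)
Your proof is correct and follows the same overall strategy as the paper: reduce the supremum over the compact parameter space $[0,T]\times\calm$ to a maximum over a finite net, apply the pointwise SLLN at net points, and control the oscillation terms via a three-term triangle inequality.

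The one noteworthy difference is how you handle the oscillation of the empirical functional $F^t_n$. The paper bounds $\sup_{(t,x)\text{ near }(t',x')}|F^t_n(x)-F^{t'}_n(x')|$ by $\tfrac{1}{n}\sum_i\Delta_k(\xi_i)$ with $\Delta_k(\xi)=\sup_{(t,x)}|\cost^t(x,\xi)-\cost^{t'}(x',\xi)|$, and then invokes a \emph{second} application of the SLLN together with dominated convergence to drive this to zero. You instead exploit the compactness of the full triple product $[0,T]\times\calm\times\calm$ to obtain uniform continuity of $\cost^t(x,y)$ jointly in all three variables, which yields a \emph{deterministic}, sample-independent bound $|F^t_n(x)-F^{t'}_n(x')|<\varepsilon$ valid for every $n$ and every realization. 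Your version is therefore slightly cleaner in this compact setting; the paper's envelope-LLN argument, on the other hand, would generalize more readily to situations where the sample space is not compact and only integrability of the local modulus is available.
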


A family $\mathcal{F}$ of measurable functions is called a Glivenko--Cantelli (GC) class if
$\sup_{f \in \mathcal{F}} |\mu_n(f) - \mu(f)| \asto 0$ as $n\to\infty$,
where $\mu_n(f) = \frac{1}{n} \sum_{i=1}^n f(\xi_i)$ and $\mu(f) = \mathbb{E}[f(\Xi)]$.
\Cref{thm:ulln-vf-function} is equivalent to the statement that the function class $\calv = \{\cost^t(x,\cdot):(t,x)\in[0,T]\times\calm\}$ is a GC class. We give an elementary proof of \Cref{thm:ulln-vf-function} in \Cref{app:proof} without referring to empirical process theory.

As a consequence of \Cref{thm:ulln-vf-function}, we can also prove ULLNs for $t$-Varadhan variances and means.

\begin{theorem}\label{thm:ulln-vf-mean-var}
Let $T>0$ be a fixed positive number. Then
\begin{equation*}
    \sup_{t\in[0,T]}|V^t_n- V^t_\star|\asto 0, \text{ as }n\to\infty.
    \end{equation*}
 Assuming uniqueness of $t$-Varadhan means, then
    \begin{equation*}
    \sup_{t\in[0,T]}\dm(x^t_n, x^t_\star)\asto 0,\text{ as }n\to \infty.
    \end{equation*}

\end{theorem}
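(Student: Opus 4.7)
The plan is to derive both statements from the ULLN for Varadhan functions (\Cref{thm:ulln-vf-function}), combined with the joint uniform continuity of $(t,x)\mapsto F^t_\mu(x)$ on $[0,T]\times\calm$ and the continuity of $t\mapsto x^t_\star$ on $[0,T]$ (both noted in the paragraph following \Cref{prop:int-vf}), plus the standing uniqueness assumption on $t$-Varadhan means.

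The variance half is essentially free. Since taking the infimum is $1$-Lipschitz with respect to the uniform norm, we have $|V^t_n - V^t_\star|\le \sup_{x\in\calm}|F^t_n(x)-F^t_\mu(x)|$ pointwise in $t$, so taking $\sup_{t\in[0,T]}$ on both sides and invoking \Cref{thm:ulln-vf-function} closes the argument in one line.

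For the mean half I would proceed by a compactness-and-contradiction scheme, working on the almost sure event $\Omega_0$ on which $\sup_{t,x}|F^t_n-F^t_\mu|\to 0$. Fix $\omega\in\Omega_0$, and suppose that the supremum in $t$ of $\dm(x^t_n(\omega),x^t_\star)$ fails to tend to $0$. Then there exist $\epsilon>0$ and subsequences $n_k\to\infty$ and $t_{n_k}\in[0,T]$ with $\dm(x^{t_{n_k}}_{n_k}(\omega),x^{t_{n_k}}_\star)\ge\epsilon$. Exploiting compactness of $[0,T]$ and $\calm$, pass to a further subsequence (not relabeled) along which $t_{n_k}\to t^*\in[0,T]$ and $x^{t_{n_k}}_{n_k}(\omega)\to x^*\in\calm$; continuity of $t\mapsto x^t_\star$ then also gives $x^{t_{n_k}}_\star\to x^{t^*}_\star$. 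A four-term chain of equalities, combining joint continuity of $F^t_\mu$, the ULLN from \Cref{thm:ulln-vf-function}, the identity $V^{t}_n=F^t_n(x^t_n)$, and the variance half already proved, yields $F^{t^*}_\mu(x^*)=V^{t^*}_\star$. Thus $x^*$ is a minimizer of $F^{t^*}_\mu$, and uniqueness forces $x^*=x^{t^*}_\star$, contradicting $\dm(x^{t_{n_k}}_{n_k}(\omega),x^{t_{n_k}}_\star)\ge\epsilon$.

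No step is technically deep, but the one place to be careful is the chain of equalities that yields $F^{t^*}_\mu(x^*)=V^{t^*}_\star$: it simultaneously invokes joint continuity of $F^t_\mu$ up to the boundary $t=0$ (which is where Varadhan's theorem enters, via \Cref{prop:int-vf}) and the uniform variance convergence from the first half. A secondary subtlety is ensuring that the limiting point $t^*$ is still handled by the uniqueness hypothesis when $t^*=0$, i.e.\ when the limiting minimizer is the \fre mean itself; but this is built into the standing assumption. The remainder is routine bookkeeping with diagonal subsequence extraction on the compact parameter space $[0,T]\times\calm$.
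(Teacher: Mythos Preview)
Your proposal is correct and follows essentially the same argument as the paper: the variance half is the one-line Lipschitz bound for $\min$ combined with \Cref{thm:ulln-vf-function}, and the mean half is the identical compactness-and-contradiction scheme (extract $t_{n_k}\to t^*$, $x^{t_{n_k}}_{n_k}\to x^*$, use joint continuity of $F^t_\mu$ and the ULLN to identify $F^{t^*}_\mu(x^*)=V^{t^*}_\star$, then contradict uniqueness). The only cosmetic difference is that you work pathwise on the full-measure event $\Omega_0$ while the paper phrases the contradiction as ``with probability greater than $0$''; these are equivalent.
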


\begin{remark}
 For fixed $t>0$, a strong LLN for diffusion $t$-means ($t$-Varadhan means) was proved in \citep[Lemma 4.2 and Lemma 4.3]{eltzner2023diffusion}. For $t=0$, \Cref{thm:ulln-vf-mean-var} implies a strong LLN for \fre means on compact Riemannian manifolds, i.e.,
\begin{equation*}
\dm(x^0_n,x^0_\star)\asto0,\,\text{ as }n\to\infty,
\end{equation*}
which was proved in \citep[Proposition 1]{hotz2024central}.
\end{remark}

\section{Central Limit Theorems}\label{sec:clt-func-var}

In this section we present CLTs for $t$-Varadhan functions, variances, and means at a \emph{fixed} time $t$. In particular, for $t$-Varadhan functions and variances, we assume a fixed $t\ge 0$, while for $t$-Varadhan means we assume a fixed $t>0$. We discuss the limiting case $t\to 0^+$ for $t$-Varadhan means in the next section.

\subsection{Central Limit Theorems for Varadhan Functions and Variances}

For a smooth function $f:\calm\to\bbr$, let $\grad_x(f)$ denote its gradient at $x$.
For each fixed $t\ge 0$ and $x\in\calm$, consider the function $\cost^t_x(\cdot)=\cost^t(x,\cdot)$.
When $t>0$, the function $\cost^t_x:\calm\to\bbr$ is smooth, and we have
\begin{equation*}
\|\cost^t_x-\cost^t_y\|_\infty\le \bigg(\sup_{w,z\in\calm}\big\|\grad_z(\cost^t_w)\big\|_{T_z\calm}\bigg)\dm(x,y)\le L_t\dm(x,y).
\end{equation*}
Let $D_\calm$ be the diameter of $\calm$ and set $L_0=2D_\calm$. When $t=0$,
\begin{equation*}
\|\cost^0_x-\cost^0_y\|_\infty=\sup_{z\in\calm}|\dm^2(x,z)-\dm^2(y,z)|\le 2D_\calm\dm(x,y)=L_0\dm(x,y).
\end{equation*}
In summary, for each fixed $t\ge 0$, the following function class
\begin{equation*}
    \calv^t = \{\cost^t_x:x\in\calm\}\subseteq C(\calm)
\end{equation*}
is Lipschitz in its parameters. Classical empirical process theory implies that $\calv^t$ is a Donsker class (we include a background of Donsker class in \Cref{app:donsker}). Since the empirical process here takes value in $C(\calm)$, its weak convergence is identified as follows: a sequence $\{h_k\}$ of $C(\calm)$-valued random variables    converges weakly to $h$, denoted by $h_k\wto h$, if $\bbe[\theta(h_k)]\to\bbe[\theta(h)]$ for every bounded continuous function $\theta:C(\calm)\to \bbr$.  Consequently, we have the following CLT for $t$-Varadhan functions.

\begin{theorem}\label{thm:clt-vf-function}
    For every fixed $t\ge 0$,
    $$
    \sqn(F^t_n-F^t_\mu)\wto G^t_\mu,\,\text{ as }n\to\infty,
    $$
    where $G^t_\mu$ is a Gaussian process indexed by $\calm$ with zero mean and covariance structure
    $$
    \bbe[G^t_\mu(x)G^t_\mu(y)] = \bbe[ \cost^t_x(\Xi)\cost^t_y(\Xi)]- F^t_\mu(x)F^t_\mu(y).
    $$
\end{theorem}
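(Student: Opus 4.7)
The plan is to derive \Cref{thm:clt-vf-function} as a direct consequence of the abstract Donsker theorem applied to the function class $\calv^t = \{\cost^t_x : x \in \calm\}$ on the probability space $(\calm, \mu)$. The paper has already verified the crucial input — namely that $\calv^t$ is uniformly Lipschitz in its parameter with constant $L_t$ — so the substantive remaining task is a bracketing-entropy calculation that exploits the Lipschitz structure together with the compactness and finite dimension of $\calm$.

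For the entropy estimate, I would fix $\epsilon > 0$, set $\delta = \epsilon/(2L_t)$, and cover $\calm$ by geodesic balls of radius $\delta$ centered at a $\delta$-net $\{x_1, \ldots, x_N\}$. Since $\calm$ is a compact $m$-dimensional Riemannian manifold, standard volume comparison gives $N \le C_\calm \delta^{-m}$ with $C_\calm$ independent of $\epsilon$. For each $i$, the pair $[\cost^t_{x_i} - \epsilon/2,\ \cost^t_{x_i} + \epsilon/2]$ forms an $L^2(\mu)$-bracket of $L^2$-diameter at most $\epsilon$, and these brackets cover $\calv^t$ by the Lipschitz property. Hence
\begin{equation*}
\log N_{[\,]}\bigl(\epsilon,\ \calv^t,\ L^2(\mu)\bigr) \le m \log(1/\epsilon) + O(1),
\end{equation*}
so the bracketing entropy integral $\int_0^1 \sqrt{\log N_{[\,]}(\epsilon, \calv^t, L^2(\mu))}\, \diff{\epsilon}$ converges. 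By the bracketing CLT (e.g.\ van der Vaart--Wellner, Theorem~2.5.6), $\calv^t$ is $\mu$-Donsker.

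The Donsker property yields weak convergence $\sqn(\mu_n - \mu) \wto \bbg^t$ in $\ell^\infty(\calv^t)$, where $\bbg^t$ is a tight zero-mean Gaussian process with covariance $\bbe[\bbg^t(f)\bbg^t(g)] = \mu(fg) - \mu(f)\mu(g)$. Composing with the continuous (in fact Lipschitz) parameterization $\psi : \calm \to \calv^t$, $\psi(x) = \cost^t_x$, I would set $G^t_\mu(x) := \bbg^t(\psi(x))$; specializing the covariance formula to $f = \cost^t_x$ and $g = \cost^t_y$ yields the stated covariance structure. Since the finite-$n$ sample paths $x \mapsto \sqn(F^t_n(x) - F^t_\mu(x))$ are automatically continuous and the Donsker property delivers asymptotic equicontinuity, the weak convergence lifts from $\ell^\infty(\calv^t)$ to $C(\calm)$ with the uniform topology.

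The main obstacle is really just the bracketing argument itself; once that is in hand the rest is routine invocation of standard empirical process machinery. A small boundary subtlety worth flagging is the case $t = 0$, where $\cost^0_x$ is merely continuous rather than smooth, but the Lipschitz bound $\|\cost^0_x - \cost^0_y\|_\infty \le 2\diam(\calm)\, \dm(x,y)$ already recorded in the paper is all the bracketing construction requires, so $t=0$ is handled uniformly with $t > 0$.
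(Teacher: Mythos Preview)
Your proposal is correct and follows essentially the same route as the paper's own proof: both verify that $\calv^t$ is $\mu$-Donsker by combining the Lipschitz parameterization $\|\cost^t_x-\cost^t_y\|_\infty\le L_t\,\dm(x,y)$ with the covering-number estimate $N(\epsilon,\calm,\dm)\lesssim\epsilon^{-m}$ to bound the bracketing integral, then read off the covariance from the abstract Donsker theorem. The only cosmetic difference is that you spell out the bracket construction explicitly (which is exactly the content of the paper's \Cref{lemma:lip-entropy}) and add an explicit remark about lifting the weak convergence from $\ell^\infty(\calv^t)$ to $C(\calm)$, a point the paper handles implicitly by noting that the empirical processes are $C(\calm)$-valued.
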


Combining \Cref{thm:clt-vf-function} with the functional delta method, we obtain the following CLT for $t$-Varadhan variances. We use the same notation $\wto$ to denote convergence in distribution.

\begin{theorem}\label{thm:clt-vf-var}
For every fixed $t\ge 0$,
    $$
    \sqn(V^t_n-V^t_\star)\wto \mathcal{N}(0,\sigma^t),\,\text{ as }n\to\infty,
    $$
    where the variance $\sigma^t$ is given by
    $$
    \sigma^t = \bbe[ \big(\cost^t_{x^t_\star}(\Xi)\big)^2]-\big(V^t_\star\big)^2.
    $$
\end{theorem}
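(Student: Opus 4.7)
The plan is to obtain the CLT for $t$-Varadhan variances as a consequence of the CLT for $t$-Varadhan functions (\Cref{thm:clt-vf-function}) by applying the functional delta method to the minimum functional $\phi:C(\calm)\to\bbr$, $\phi(f)=\min_{x\in\calm}f(x)$. By construction, $V^t_n=\phi(F^t_n)$ and $V^t_\star=\phi(F^t_\mu)$, so the entire statement reduces to tracking how weak convergence of $\sqn(F^t_n-F^t_\mu)$ in $C(\calm)$ is transformed by $\phi$.

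The main technical step is to show that $\phi$ is Hadamard directionally differentiable at $F^t_\mu$, with derivative $\phi'_{F^t_\mu}(h)=h(x^t_\star)$. Let $h_k\to h$ uniformly on $\calm$ and $\tau_k\downarrow 0$. I would write
\begin{equation*}
\phi(F^t_\mu+\tau_k h_k) - \phi(F^t_\mu) = \min_{x\in\calm}\bigl(F^t_\mu(x)+\tau_k h_k(x)\bigr) - F^t_\mu(x^t_\star),
\end{equation*}
and bound this above by $\tau_k h_k(x^t_\star)$ by plugging in $x^t_\star$. For the lower bound, let $y_k$ be a minimizer of $F^t_\mu+\tau_k h_k$; since $\calm$ is compact, pass to a convergent subsequence $y_k\to y_\infty$. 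Using $\|h_k\|_\infty$ bounded and $\tau_k\to 0$ together with continuity of $F^t_\mu$, one gets $F^t_\mu(y_\infty)=V^t_\star$, so by uniqueness of the $t$-Varadhan mean $y_\infty=x^t_\star$. Continuity of $h$ then gives $h_k(y_k)\to h(x^t_\star)$, from which $\tau_k^{-1}(\phi(F^t_\mu+\tau_k h_k)-\phi(F^t_\mu))\to h(x^t_\star)$. The uniqueness assumption on $x^t_\star$ is precisely what rescues an otherwise non-smooth functional; without it one only gets inf/sup over the set of minimizers (this is the standard Danskin-type argument, and the only real obstacle in the argument).

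With Hadamard directional differentiability in hand, the functional delta method (see, e.g., Shapiro's version which only requires directional differentiability) combined with \Cref{thm:clt-vf-function} yields
\begin{equation*}
\sqn(V^t_n-V^t_\star)=\sqn\bigl(\phi(F^t_n)-\phi(F^t_\mu)\bigr)\wto \phi'_{F^t_\mu}(G^t_\mu)=G^t_\mu(x^t_\star).
\end{equation*}
Since $G^t_\mu$ is a centered Gaussian process on $\calm$, the one-dimensional marginal $G^t_\mu(x^t_\star)$ is a centered Gaussian random variable with variance read off from the covariance structure in \Cref{thm:clt-vf-function}:
\begin{equation*}
\mathrm{Var}\bigl(G^t_\mu(x^t_\star)\bigr)=\bbe\bigl[\bigl(\cost^t_{x^t_\star}(\Xi)\bigr)^2\bigr]-F^t_\mu(x^t_\star)^2=\bbe\bigl[\bigl(\cost^t_{x^t_\star}(\Xi)\bigr)^2\bigr]-(V^t_\star)^2=\sigma^t,
\end{equation*}
where the middle equality uses $F^t_\mu(x^t_\star)=V^t_\star$ by definition of the $t$-Varadhan mean. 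This completes the proof.
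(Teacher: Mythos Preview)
Your proposal is correct and follows essentially the same route as the paper: both derive the CLT for $V^t_n$ from \Cref{thm:clt-vf-function} via the functional delta method applied to the min functional on $C(\calm)$, identifying the derivative as evaluation at the unique minimizer $x^t_\star$ and reading off $\sigma^t$ from the covariance of $G^t_\mu$. The only cosmetic difference is that the paper quotes the Hadamard differentiability of the min functional from \citep{carcamo2020directional}, whereas you supply the (standard Danskin-type) argument directly.
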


\begin{remark}
  For $t=0$, \Cref{thm:clt-vf-var} is consistent with the CLT for \fre variances on compact Riemannian manifolds \citep[Theorem 1]{dubey2019frechet}.  
\end{remark}

While in \Cref{sec:consistency} we established ULLNs for $t$-Varadhan functions and variances, it remains unclear whether the CLTs can also hold uniformly with respect to the time parameter $t$. In fact, in \Cref{app:log-derivative} we show that the temporal derivative $\partial_t\cost^t(x,y)$ blows up at a rate $O(m\log t)$ as $t\to 0^+$, which prevents us to use the same technique to show that the big function class $\calv=\bigcup_{t\ge 0}\calv^t$ is a Donsker class.

\subsection{Central Limit Theorems for Varadhan Means}

For any $x\in\calm$, its cut locus $\mathrm{Cut}_x$ consists of points beyond which geodesics starting at $x$ cease to be minimizing. Let $\rexp_x$ be the exponential map which sends each tangent vector $v\in T_x\calm$ to $\rexp_x(v)\in\calm$. Its inverse $\mathrm{Log}_x$, the Riemannian logarithm map, is well-defined on $\calm\backslash\mathrm{Cut}_x$. The injective radius of $\calm$ is defined as $i_\calm=\min_x d_\calm(x,\mathrm{Cut}_x)$. By \Cref{thm:ulln-vf-mean-var}, for large enough $n\in\bbn$, almost surely we have $\dm(x^t_\star,x^t_n)<i_\calm$, thus the tangent vectors $\rlog{x^t_\star}{x^t_n}\in T_{x^t_\star}\calm$ are almost surely well-defined. The CLT for $t$-Varadhan means is formulated in terms of the tangent vectors $\rlog{x^t_\star}{x^t_n}$.

For any smooth function $f:\calm\to\bbr$, let  $\hess_x(f)$ denote its  Hessian at $x$. By fixing an orthonormal basis at $T_x\calm$, we identify $\grad_x(f)$ as an $m$-dimensional column vector and   $\hess_x(f)$ as an $m\times m$ matrix. We also denote $\bms^t_y(\cdot)=\cost^t(\cdot,y)$ (not to be confused with $\cost^t_x(\cdot)=\cost^t(x,\cdot)$).

\begin{theorem}\label{thm:clt-vf-mean}
    Fix $t>0$. Assume that the Hessian $\hess_{x^t_\star}(F^t_\mu)$ is  positive definite. Then
    \begin{equation*}
    \sqn\,\rlog{x^t_\star}{x^t_n}\wto \mathcal{N}(0,\Sigma^t),\,\text{ as }n\to\infty,
    \end{equation*}
    where the covariance matrix $\Sigma^t$ is given by
    \begin{equation*}
     \Sigma^t=\big(\hess^{-1}_{x^t_\star}(F^t_\mu)\big)\bbe\big[(\grad_{x^t_\star}(\bms^t_\Xi))(\grad_{x^t_\star}(\bms^t_\Xi))^\top\big]\big((\hess^{-1}_{x^t_\star}(F^t_\mu)\big)^\top.
    \end{equation*}
\end{theorem}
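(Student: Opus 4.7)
The plan is a standard $M$-estimator argument, transferred to the tangent space at $x^t_\star$ through the exponential chart. Because $t>0$ is fixed, the heat kernel $\hk$ is smooth and strictly positive on $\calm\times\calm$, so $\cost^t$, $\bms^t_y$, $F^t_\mu$, and $F^t_n$ are all smooth. Applying \Cref{thm:ulln-vf-mean-var} at this fixed time gives $\dm(x^t_n,x^t_\star)\asto 0$, so for all $n$ large enough $\dm(x^t_n,x^t_\star)<i_\calm$ and $v_n:=\rlog{x^t_\star}{x^t_n}\in T_{x^t_\star}\calm$ is a well-defined random vector.

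I would then fix an orthonormal basis of $T_{x^t_\star}\calm$, identify it with $\bbr^m$, and define $\widetilde F^t_n(v)=F^t_n(\rexp_{x^t_\star}(v))$ and $\widetilde F^t_\mu(v)=F^t_\mu(\rexp_{x^t_\star}(v))$ on a small Euclidean ball $U$ around the origin. Both are smooth; since $v_n$ is an interior minimizer of $\widetilde F^t_n$ for large $n$, the first-order condition $\nabla\widetilde F^t_n(v_n)=0$ holds. An integral Taylor expansion at the origin gives
\begin{equation*}
0 \;=\; \nabla\widetilde F^t_n(0) \;+\; A_n\,v_n, \qquad A_n\,:=\,\int_0^1 \nabla^2\widetilde F^t_n(s v_n)\,ds,
\end{equation*}
so that, once $A_n$ is invertible, $\sqn\,v_n = -A_n^{-1}\sqn\,\nabla\widetilde F^t_n(0)$.

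For the score term, in the chosen basis $\nabla\widetilde F^t_n(0)=\frac{1}{n}\sum_{i=1}^n \grad_{x^t_\star}(\bms^t_{\xi_i})$. These summands are i.i.d.\ bounded $\bbr^m$-valued random vectors with common mean $\grad_{x^t_\star}(F^t_\mu)=0$ (the gradient vanishes at the interior minimizer, and the interchange of $\grad$ with $\bbe$ is standard for the bounded smooth integrand $\cost^t$ on compact $\calm$), so the classical multivariate CLT yields
\begin{equation*}
\sqn\,\nabla\widetilde F^t_n(0)\;\wto\;\mathcal{N}\!\Big(0,\,\bbe\big[(\grad_{x^t_\star}(\bms^t_\Xi))(\grad_{x^t_\star}(\bms^t_\Xi))^\top\big]\Big).
\end{equation*}
For the Hessian term, the target is $A_n\pto \hess_{x^t_\star}(F^t_\mu)$. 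Since $v_n\asto 0$ and $\nabla^2\widetilde F^t_\mu$ is continuous with $\nabla^2\widetilde F^t_\mu(0)=\hess_{x^t_\star}(F^t_\mu)$, it suffices to establish a ULLN $\sup_{v\in U}\|\nabla^2\widetilde F^t_n(v)-\nabla^2\widetilde F^t_\mu(v)\|\asto 0$. I would obtain this by applying the argument of \Cref{thm:ulln-vf-function} entrywise to the family $\{v\mapsto \partial_{ij}\cost^t(\rexp_{x^t_\star}(v),\xi)\}$, which is uniformly continuous and bounded on the compact set $U\times\calm$ since $t>0$ is fixed. Slutsky's theorem, combined with the continuity of matrix inversion at the positive-definite matrix $\hess_{x^t_\star}(F^t_\mu)$, then delivers the covariance $\Sigma^t$ as stated.

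The principal technical ingredient is the Hessian ULLN, but smoothness of the heat kernel at fixed $t>0$ renders it a routine Glivenko--Cantelli-type result; the rest is a direct Riemannian translation of the standard Euclidean $M$-estimator CLT. The genuinely hard regime is $t=0$, where $\cost^0=\dm^2$ can lose twice-differentiability on the cut locus and this clean argument breaks down; that limiting case is exactly what the next section addresses by studying small-time asymptotics of $\grad F^t_\mu$ and $\hess F^t_\mu$.
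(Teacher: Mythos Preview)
Your proposal is correct and follows the classical $M$-estimator route: linearize the empirical first-order condition via an integral (exact) Taylor expansion and control the random Hessian $A_n$ by a Glivenko--Cantelli argument for the entrywise second derivatives. The paper takes a different but equally standard route: it parallel-transports the gradient field to $T_{x^t_\star}\calm$, performs the Taylor expansion at the \emph{population} level (so the deterministic Hessian $\hess_{x^t_\star}(F^t_\mu)$ appears immediately, with no random $A_n$), and then handles the discrepancy $\nu_n(\mathbf g_{x^t_n})-\nu_n(\mathbf g_{x^t_\star})$ by proving that the parallel-transported gradient class $\calg_k$ is $\mu$-Donsker and invoking asymptotic equicontinuity (\Cref{thm:equi-continuity}). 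In short, you trade the Donsker/equicontinuity machinery for a second-order ULLN; both are legitimate because $t>0$ makes everything smooth and bounded on compact $\calm$. Your approach is arguably more elementary and self-contained (it reuses only the \Cref{thm:ulln-vf-function} argument), while the paper's approach is coordinate-free via parallel transport and keeps the stochastic analysis at the level of the gradient process, which dovetails with the empirical-process framework already set up for \Cref{thm:clt-vf-function}. One small point worth making explicit in your write-up: in normal coordinates the Euclidean Hessian $\nabla^2\widetilde F^t_\mu(v)$ agrees with the Riemannian $\hess$ only at $v=0$, but since you only need $A_n\pto\nabla^2\widetilde F^t_\mu(0)=\hess_{x^t_\star}(F^t_\mu)$ this causes no trouble.
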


\begin{remark}
    A smeary CLT for diffusion $t$-means ($t$-Varadhan means) was proved in \citep[Theorem 4.5]{eltzner2023diffusion}. Compared to \cite{eltzner2023diffusion}, our result gives an explicit expression of the covariance matrix $\Sigma^t$, which is crucial for us to build a connection to the CLT for \fre means in \cite{hotz2024central}. 
\end{remark}

For each $t>0$, thanks to the smoothness of $\cost^t(x,y)$ and compactness of $\calm$, we can exchange integration with differentiation, hence
\begin{equation}\label{eq:exchange}
    \begin{aligned}
         \grad_x(F^t_\mu) = \bbe[ \grad_x(\bms^t_\Xi)],\quad
         \hess_x(F^t_\mu) = \bbe[ \hess_x(\bms^t_\Xi)].
    \end{aligned}
\end{equation}
In general \eqref{eq:exchange} does not hold at $t=0$ since $\cost^0(x,y)=\dm^2(x,y)$ is not smooth. The inherent regularity explains why establishing CLT for $t$-Varadhan means is simpler than for \fre means.

\section{Small Time Asymptotics}\label{sec:small-time}

\subsection{Convergence of Gradients and Hessians}

Assume that the probability measure $\mu$ is absolutely continuous to the volume measure $\vol_\calm$. It follows that $\mu(\mathrm{Cut}_x)=0$ for all $x\in\calm$ \citep{petersen2006riemannian}.
The gradient of the \fre function is given by 
\begin{equation*}
    \grad_x(F^0_\mu) = \bbe[\grad_x(\bms^0_\Xi)] = -2\int_\calm \rlog{x}{\xi}\diff{\mu}(\xi).
\end{equation*}
Using the logarithmic heat kernel estimates in \cite{chen2023logarithmic}, we can show that the gradients of $t$-Varadhan functions converge pointwise to the gradient of the \fre function.

\begin{proposition}\label{prop:gradient-converge}
    Assume that $\mu\ll \vol_\calm$. Then for any $x\in\calm$, $\grad_x(F^t_\mu) \to \grad_x(F^0_\mu)$ as $t\to 0^+$.
\end{proposition}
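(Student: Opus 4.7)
The plan is to represent both gradients as $\mu$-integrals and pass to the limit by dominated convergence. Using the interchange \eqref{eq:exchange} valid for each $t>0$, one writes
\begin{equation*}
\grad_x(F^t_\mu) = \int_\calm \grad_x(\bms^t_\xi)\diff{\mu}(\xi) = \int_\calm \big(-2t\,\grad_x \log K(t,x,\xi)\big)\diff{\mu}(\xi),
\end{equation*}
while the $t=0$ gradient is already given by $\grad_x(F^0_\mu) = \int_\calm -2\rlog{x}{\xi}\diff{\mu}(\xi)$, with the integrand defined $\mu$-almost everywhere because $\mu \ll \vol_\calm$ forces $\mu(\mathrm{Cut}_x)=0$.

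Next I would establish the pointwise limit
\begin{equation*}
-2t\,\grad_x \log K(t,x,\xi) \;\longrightarrow\; -2\rlog{x}{\xi}, \qquad \xi \in \calm\setminus \mathrm{Cut}_x.
\end{equation*}
Off the cut locus the Minakshisundaram--Pleijel expansion gives $K(t,x,\xi)=(2\pi t)^{-m/2}e^{-\dm(x,\xi)^2/(2t)}(u_0(x,\xi)+O(t))$, with smooth, locally uniform spatial dependence, so that
\begin{equation*}
-2t\log K(t,x,\xi) = \dm(x,\xi)^2 + mt\log(2\pi t) - 2t\log u_0(x,\xi) + O(t^2).
\end{equation*}
Differentiating in $x$ and using $\grad_x\dm(x,\xi)^2 = -2\rlog{x}{\xi}$ off $\mathrm{Cut}_x$, the leading term produces $-2\rlog{x}{\xi}$ while the remainder is $O(t)$, yielding the claimed limit.

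To swap limit and integral, I would invoke the logarithmic heat kernel gradient estimates of \cite{chen2023logarithmic}, which on the compact manifold $\calm$ yield a bound of the form $|2t\,\grad_x \log K(t,x,\xi)|_{T_x\calm} \le C_1\dm(x,\xi) + C_2\sqrt{t}$. Since $\dm$ is bounded by $D_\calm$ and $t\in(0,T]$ is bounded, this produces a uniform constant upper bound on the integrand, which is trivially $\mu$-integrable. Dominated convergence then gives $\grad_x(F^t_\mu)\to\grad_x(F^0_\mu)$ as $t\to 0^+$.

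The main obstacle is the uniform domination of $|2t\,\grad_x \log K(t,x,\xi)|$ in a neighborhood of $\mathrm{Cut}_x$: the off-cut-locus expansion used for the pointwise limit is useless there, so the control must come from a global gradient estimate on the heat kernel rather than from its asymptotic expansion. Establishing that bound is exactly what is borrowed from \cite{chen2023logarithmic}; once it is in hand, the rest of the argument is a routine application of dominated convergence combined with the short-time heat kernel expansion.
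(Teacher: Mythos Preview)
Your proposal is correct and follows essentially the same route as the paper: both arguments express $\grad_x(F^t_\mu)$ as $\int_\calm \grad_x(\bms^t_\xi)\diff{\mu}(\xi)$, establish the pointwise limit $\grad_x(\bms^t_\xi)\to\grad_x(\bms^0_\xi)$ off the cut locus (you via the Minakshisundaram--Pleijel expansion, the paper via the cited derivative-convergence lemma), and then invoke dominated convergence using the uniform gradient bound $\|\grad_x\bms^t_\xi\|\lesssim \sqrt{t}+\dm(x,\xi)$ from \cite{chen2023logarithmic}. Your identification of the key obstacle---domination near $\mathrm{Cut}_x$---and its resolution via the global Chen--Li--Wu estimate is exactly the point the paper relies on.
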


For any $x\in\calm$ and $\delta>0$, let $B_\delta(x)=\{y\in\calm:\dm(x,y)<\delta\}$ be the open ball of radius $\delta$ at $x$. Define
\begin{equation*}
    \calc_\delta(x)  = \bigcup_{y\in\mathrm{Cut}_x} B_\delta(y) =\{z\in\calm:\dm(z,\mathrm{Cut}_x)<\delta\}.
\end{equation*}
Under the assumption that $\mu\ll \vol_\calm$, it follows that $\mu(\calc_\delta(x))\to 0$ as $\delta\to 0^+$. Furthermore, we define
\begin{equation}\label{eq:Jtdelta}
    J^{t,\delta}_\mu(x) = \int_{\calc_x(\delta)}\hess_x(\bms^t_\xi)\diff{\mu}(\xi).
\end{equation}
For a fixed $\delta>0$, the limit $\displaystyle\lim_{t\to 0^+}J^{t,\delta}_\mu(x)$ may not exist. However, under mild assumptions, the following proposition shows that $\displaystyle\lim_{\delta\to 0^+}\varlimsup_{t\to 0^+} J^{t,\delta}_\mu(x)$ and $\displaystyle\lim_{\delta\to 0^+}\varliminf_{t\to 0^+} J^{t,\delta}_\mu(x)$ always exist and determine the pointwise convergence of $\hess_x(F^t_\mu)$.
\begin{proposition}\label{prop:hessian-converge}
    Assume that $\mu\ll\vol_\calm$. For any $x\in\calm$, suppose the function $\xi\mapsto \hess_x(\bms^0_\xi)$ is $\mu$-integrable, then $\displaystyle\lim_{t\to 0^+}\hess_x(F^t_\mu)$ exists if and only if the following two limits are equal
    \begin{equation}\label{eq:limitJ}
        \lim_{\delta\to 0^+}\varlimsup_{t\to 0^+} J^{t,\delta}_\mu(x)  = \lim_{\delta\to 0^+}\varliminf_{t\to 0^+} J^{t,\delta}_\mu(x).
    \end{equation}
\end{proposition}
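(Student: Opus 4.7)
The plan is to decompose the Hessian integral into a part supported on the compact ``safe'' region $\calm\setminus\calc_\delta(x)$, where the logarithmic heat kernel enjoys smooth convergence, and the residual piece $J^{t,\delta}_\mu(x)$ which carries the singular behavior coming from the cut locus. Using the exchange identity \eqref{eq:exchange} and the definition \eqref{eq:Jtdelta}, I would write
\[
\hess_x(F^t_\mu)=\int_{\calm\setminus\calc_\delta(x)}\hess_x(\bms^t_\xi)\diff{\mu}(\xi)+J^{t,\delta}_\mu(x)
\]
for every $t>0$ and $\delta>0$, and then analyze the limits $t\to 0^+$ and $\delta\to 0^+$ in that order.

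With $\delta$ fixed, every $\xi\in\calm\setminus\calc_\delta(x)$ satisfies $\dm(\xi,\mathrm{Cut}_x)\ge\delta$, so $\cost^0(\,\cdot\,,\xi)$ is smooth on a fixed open neighborhood of $x$ uniformly in $\xi$ on this compact set. The off-cut-locus parametrix expansion for $K(t,x,y)$ then delivers uniform convergence $\hess_x(\bms^t_\xi)\to\hess_x(\bms^0_\xi)$ as $t\to 0^+$, uniformly for $\xi\in\calm\setminus\calc_\delta(x)$, which integrates to
\[
\lim_{t\to 0^+}\int_{\calm\setminus\calc_\delta(x)}\hess_x(\bms^t_\xi)\diff{\mu}(\xi)=:A(\delta)=\int_{\calm\setminus\calc_\delta(x)}\hess_x(\bms^0_\xi)\diff{\mu}(\xi).
\]
Sending $\delta\to 0^+$, the paper already observes that $\mu\ll\vol_\calm$ implies $\mu(\calc_\delta(x))\to 0$; combined with the hypothesized $\mu$-integrability of $\xi\mapsto\hess_x(\bms^0_\xi)$, dominated convergence yields $A(\delta)\to\bbe[\hess_x(\bms^0_\Xi)]$.

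Combining the two steps entrywise, for every $\delta>0$ the $\varlimsup$ and $\varliminf$ as $t\to 0^+$ of the decomposition read
\[
\varlimsup_{t\to 0^+}\hess_x(F^t_\mu)=A(\delta)+\varlimsup_{t\to 0^+}J^{t,\delta}_\mu(x),\qquad \varliminf_{t\to 0^+}\hess_x(F^t_\mu)=A(\delta)+\varliminf_{t\to 0^+}J^{t,\delta}_\mu(x).
\]
The left-hand sides are independent of $\delta$, so letting $\delta\to 0^+$ shows that both limits in \eqref{eq:limitJ} exist, and \eqref{eq:limitJ} holds if and only if $\varlimsup_{t\to 0^+}\hess_x(F^t_\mu)=\varliminf_{t\to 0^+}\hess_x(F^t_\mu)$, equivalently, if and only if $\lim_{t\to 0^+}\hess_x(F^t_\mu)$ exists. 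This gives both directions of the proposition.

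The principal obstacle is the uniform convergence of Hessians invoked in the second paragraph, which is strictly stronger than the $C^0$-statement of Varadhan's theorem. I would derive it from the Minakshisundaram--Pleijel parametrix
\[
K(t,x,y)=(2\pi t)^{-m/2}e^{-\dm^2(x,y)/(2t)}\bigl(u_0(x,y)+tu_1(x,y)+\cdots\bigr),
\]
which holds together with arbitrarily many spatial derivatives uniformly on compact subsets of $\{(x,y):y\notin\mathrm{Cut}_x\}$; differentiating $-2t\log K$ twice in $x$ and letting $t\to 0^+$ recovers $\hess_x(\dm^2(\cdot,y))$ with uniform error. Equivalently, one could push the logarithmic heat kernel estimates of \cite{chen2023logarithmic} used in \Cref{prop:gradient-converge} one derivative further to reach the same conclusion.
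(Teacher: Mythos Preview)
Your proposal is correct and follows essentially the same approach as the paper: decompose $\hess_x(F^t_\mu)$ into the integral over $\calm\setminus\calc_\delta(x)$ plus $J^{t,\delta}_\mu(x)$, invoke uniform convergence of $\hess_x(\bms^t_\xi)$ on the compact off-cut-locus piece (the paper cites \Cref{lemma:derivative-convergence} from \cite{neel2006small}, which is exactly the Minakshisundaram--Pleijel-type statement you sketch), then use $\mu$-integrability of $\hess_x(\bms^0_\Xi)$ to pass $\delta\to 0^+$. Your additional remark that the left-hand sides of the $\varlimsup/\varliminf$ identities are $\delta$-independent, hence the limits in \eqref{eq:limitJ} automatically exist, is a nice clarification the paper leaves implicit.
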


Compared to the convergence of gradients in \Cref{prop:gradient-converge}, the convergence of Hessians in \Cref{prop:hessian-converge} does not imply that $\displaystyle\lim_{t\to 0^+}\hess_x(F^t_\mu)=\hess_x(F^0_\mu)$. The reasons are twofold: (1) at $t=0$, the \fre function is not always twice-differentiable, hence $\hess_x(F^0_\mu)$ may not exist; (2) even if $\hess_x(F^0_\mu)$ exists, we lack control of $\hess_x(F^t_\mu)$ near the cut locus. While for gradients $\grad_x(F^t_\mu)$ there is a uniform bound for $t\in[0,1]$, for Hessians $\hess_x(F^t_\mu)$ a similar bound blows up as $t\to 0^+$ \citep{chen2023logarithmic}. 

Let $U\subseteq\calm$ be a compact neighborhood of the \fre mean $x^0_\star$. Assuming that the convergence of $\hess_x(F^t_\mu)$ as $t\to 0^+$ is uniform for $x\in U$, we can show that the $t$-Varadhan functions $F^t_\mu$ converge to the \fre function $F^0_\mu$ in $C^2(U)$-norm. It also enables us to derive a new CLT for \fre means on compact Riemannian manifolds.



\begin{theorem}\label{thm:clt-fre-mean}
Assume that (1) $\mu\ll\vol_\calm$; (2) there is a compact, geodesically convex neighborhood $U\subseteq \calm$ of $x^0_\star$ such that $\hess_x(F^t_\mu)$ converges uniformly over $U$ as $t\to 0^+$; (3) the function $\xi\mapsto \hess_x(\bms^0_\xi)$ is $\mu$-integrable for $x\in U$. Then $F^0_\mu$ is twice differentiable on $U$ and such that
\begin{equation}\label{eq:hess-fre-multi}
    \hess_{x^0_\star}(F^0_\mu)=\lim_{t\to 0^+}\hess_{x^t_\star}(F_\mu^t)=\bbe[\hess_{x^0_\star}(\bms^0_\Xi)]+J_\mu(x^0_\star),
\end{equation}
where $J_\mu(x^0_\star)$ denotes the limit in \eqref{eq:limitJ}.
Furthermore, if $\hess_{x^0_\star}(F^0_\mu)$ is positive definite, then 
\begin{equation*}
    \sqn\,\rlog{x^0_\star}{x^0_n}\wto \mathcal{N}(0,\Sigma^0),\,\text{ as }n\to\infty,
    \end{equation*}
    where the covariance matrix $\Sigma^0$ is given by
    \begin{equation*}
     \Sigma^0=\big(\hess^{-1}_{x^0_\star}(F^0_\mu)\big)\bbe\big[(\grad_{x^0_\star}(\bms^0_\Xi))(\grad_{x^0_\star}(\bms^0_\Xi))^\top\big]\big((\hess^{-1}_{x^0_\star}(F^0_\mu)\big)^\top.
    \end{equation*}
    Let $\Sigma^t$ be the covariance matrices in \Cref{thm:clt-vf-mean}, then $\Sigma^t\to \Sigma^0$ as $t\to 0^+$.
\end{theorem}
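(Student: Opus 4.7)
The plan is to prove the four claims in sequence: (i) twice differentiability of $F^0_\mu$ on $U$; (ii) the identification $\hess_{x^0_\star}(F^0_\mu)=\bbe[\hess_{x^0_\star}(\bms^0_\Xi)]+J_\mu(x^0_\star)$; (iii) the CLT for Fr\'echet means; and (iv) the convergence $\Sigma^t\to\Sigma^0$. Working in normal coordinates on $U$ (shrunk if necessary to a compact convex subset of $\bbr^m$), I would combine three inputs: uniform convergence $F^t_\mu\to F^0_\mu$ on $U$ from \Cref{prop:int-vf}, pointwise convergence $\grad_x(F^t_\mu)\to\grad_x(F^0_\mu)$ from \Cref{prop:gradient-converge}, and uniform convergence of $\hess_x(F^t_\mu)$ on $U$ from assumption (2). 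A fundamental-theorem-of-calculus argument---integrating the uniformly converging Hessians along geodesic segments in $U$ and using the pointwise limit of $\grad(F^t_\mu)$ at $x^0_\star$---first upgrades the gradient convergence to uniform convergence on $U$, and then upgrades everything to uniform convergence of $F^t_\mu$ in $C^2(U)$. This yields claim (i) and identifies $\hess(F^0_\mu)$ as the uniform limit of $\hess(F^t_\mu)$. For claim (ii), since $x^t_\star\to x^0_\star$ by \Cref{prop:int-vf} and $\hess_x(F^t_\mu)\to\hess_x(F^0_\mu)$ uniformly on $U$ with continuous limit, a standard triangle-inequality split gives $\hess_{x^t_\star}(F^t_\mu)\to\hess_{x^0_\star}(F^0_\mu)$; since the pointwise limit $\lim_{t\to 0^+}\hess_{x^0_\star}(F^t_\mu)$ now exists, \Cref{prop:hessian-converge} (whose integrability hypothesis is exactly assumption (3)) forces the two iterated limits in \eqref{eq:limitJ} to coincide with the common value $J_\mu(x^0_\star)$, and the splitting of the expectation into the $\calc_\delta(x^0_\star)$ piece and its complement (already present in the proof of that proposition) furnishes the explicit decomposition $\hess_{x^0_\star}(F^0_\mu)=\bbe[\hess_{x^0_\star}(\bms^0_\Xi)]+J_\mu(x^0_\star)$.

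For claim (iii), with $F^0_\mu\in C^2(U)$ and $H:=\hess_{x^0_\star}(F^0_\mu)$ assumed positive definite, the plan is to apply a standard M-estimator CLT to the criterion $x\mapsto \bms^0_\xi(x)=\dm^2(x,\xi)$. The ingredients to verify are: consistency $x^0_n\to x^0_\star$ a.s.\ from \Cref{thm:ulln-vf-mean-var}; $\mu$-a.s.\ definedness and boundedness of the sample gradients $\grad_{x^0_\star}(\bms^0_\xi)=-2\rlog{x^0_\star}{\xi}$, using $\mu(\mathrm{Cut}_{x^0_\star})=0$ together with the bound $\|\grad_{x^0_\star}(\bms^0_\xi)\|\le 2D_\calm$ (yielding finite second moment); and a Donsker property for the class $\{\bms^0_x:x\in U\}$, which follows from the Lipschitz-in-parameter bound with constant $L_0=2D_\calm$ recorded in the discussion preceding \Cref{thm:clt-vf-function}. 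Feeding these into a standard M-estimator expansion (as in \cite{hotz2024central}) yields
\begin{equation*}
\sqn\,\rlog{x^0_\star}{x^0_n}=-H^{-1}\sqn\,\grad_{x^0_\star}(F^0_n)+o_P(1),
\end{equation*}
and the multivariate CLT applied to the i.i.d.\ sum $\grad_{x^0_\star}(F^0_n)=\tfrac{1}{n}\sum_i\grad_{x^0_\star}(\bms^0_{\xi_i})$ delivers the Gaussian limit with the stated covariance $\Sigma^0$.

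Finally, for claim (iv), the inverse Hessian $\hess^{-1}_{x^t_\star}(F^t_\mu)\to H^{-1}$ by claim (ii) and continuity of matrix inversion on the open cone of positive definite matrices (positive definiteness of $\hess_{x^t_\star}(F^t_\mu)$ for small $t$ is inherited from that of $H$). For the middle factor $\bbe[(\grad_{x^t_\star}(\bms^t_\Xi))(\grad_{x^t_\star}(\bms^t_\Xi))^\top]$ I would apply dominated convergence: pointwise convergence $\grad_{x^t_\star}(\bms^t_\xi)\to\grad_{x^0_\star}(\bms^0_\xi)$ for $\mu$-a.e.\ $\xi$ follows from the logarithmic heat kernel gradient estimates of \cite{chen2023logarithmic} (applicable off $\mathrm{Cut}_{x^0_\star}$) combined with $x^t_\star\to x^0_\star$, while domination follows from the parameter-Lipschitz bounds $\|\grad_z(\cost^t_w)\|\le L_t$ which are uniform in $t$ on any bounded interval. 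The step I expect to be the main obstacle is claim (iii): carrying through the M-estimator expansion without smoothness of the individual summands $\bms^0_{\xi_i}$ at the cut locus genuinely requires the $C^2$ regularity of the \emph{population} criterion $F^0_\mu$ from claim (i), together with a stochastic equicontinuity argument to control the empirical process on a $\sqn$-shrinking neighborhood of $x^0_\star$.
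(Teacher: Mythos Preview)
Your treatment of parts (i), (ii), and (iv) matches the paper's argument: the paper likewise integrates the parallel-transported Hessians along geodesics in $U$ to upgrade the pointwise gradient convergence to uniform convergence (its display \eqref{eq:int-grad}), concludes $C^2(U)$-convergence of $F^t_\mu$ to $F^0_\mu$ via completeness of $C^2(U)$, invokes \Cref{prop:hessian-converge} together with $x^t_\star\to x^0_\star$ for the Hessian identity, and disposes of $\Sigma^t\to\Sigma^0$ in one line as ``convergence of gradients and Hessians.''

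Where you diverge is part (iii). The paper asserts that ``the proof of the CLT is the same as \Cref{thm:clt-vf-mean} by setting $t=0$,'' i.e.\ it reruns the Z-estimator argument based on the gradient class $\calg_k=\{(\mathbf{g}_x)_k:x\in U\}$. You instead run a standard M-estimator CLT directly on the criterion class $\{\bms^0_x:x\in U\}$, exploiting the parameter-Lipschitz constant $L_0=2D_\calm$ already recorded before \Cref{thm:clt-vf-function}. Your route is arguably cleaner here: the Donsker property you need is already in hand, whereas transporting step (II) of the proof of \Cref{thm:clt-vf-mean} to $t=0$ is not entirely automatic, since at $t=0$ the map $x\mapsto\grad_x(\bms^0_\xi)$ need not be uniformly Lipschitz near $\mathrm{Cut}_\xi$ and the smoothness-based bound used there no longer applies verbatim. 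One small imprecision in your claim (iv): the uniform-in-$t$ domination for $\|\grad_{x^t_\star}(\bms^t_\xi)\|$ does not come from the separately defined constants $L_t$, but directly from the Chen--Li--Wu estimate $\|\grad_x\bms^t_y\|\lesssim\sqrt{t}+\dm(x,y)$ on $t\in(0,1]$, recorded in the paper as \Cref{thm:log-estimate}.
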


\begin{remark}
    If $\mu$ is supported away from the cut locus of $x^0_\star$, then $J_\mu(x^0_\star)$ vanishes identically and we have $\hess_{x^0_\star}(F^0_\mu)=\bbe[\hess_{x^0_\star}(\bms^0_\Xi)]$ and \eqref{eq:exchange} holds at $t=0$. \Cref{thm:clt-fre-mean} is consistent with the CLT for intrinsic \fre means in \cite[Theorem 2.2]{bhattacharya2005large}. For general probability distributions, the nontrivial term $J_\mu(x^0_\star)$ in \eqref{eq:hess-fre-multi} quantifies how the geometry of cut locus affects the statistical behavior of \fre means. Under mild assumptions on the volume measure near the cut locus, \cite{hotz2024central} presented an explicit expression of $J_\mu(x^0_\star)$ and derived a general CLT for \fre means on compact Riemannian manifolds. \Cref{thm:clt-fre-mean} is consistent with \cite[Theorem 1]{hotz2024central} on the form of covariance matrix.  
\end{remark}








\subsection{Examples}\label{sec:example}

In the following we present explicit computations of \Cref{prop:gradient-converge} and \Cref{prop:hessian-converge} on two examples. We always assume that the probability distribution $\mu$ is absolutely continuous to the normalized volume measure with a continuous density function $\psi:\calm\to\bbr$. We show that the expressions of $J_\mu(x)$ in \eqref{eq:limitJ} coincide with the results in \cite{hotz2024central}. 

\subsubsection{Circle}

Consider the circle $\bbs^1  =\bbr/2\pi\bbz$. The heat kernel on $\bbs^1$ is given by the periodic summation of the Gauss--Weierstrass kernel \citep{nowak2019sharp}
\begin{equation}\label{eq:heat-circle}
K(t,x,y) = \frac{1}{\sqrt{2\pi t}}\sum_{n=-\infty}^\infty \exp\bigg(-\frac{(y-x+2\pi n)^2}{2t}\bigg).
\end{equation}
Therefore for $t>0$, the function $\cost^t(x,y)$ is given by 
\begin{equation*}
\cost^t(x,y) = -2t\log (K(t,x,y))=t\log(2\pi t)-2t\log\bigg(\sum_{n=-\infty}^\infty \exp\bigg(-\frac{(y-x+2\pi n)^2}{2t}\bigg)\bigg).
\end{equation*}
Without loss of generality we consider a small neighborhood of $x=0$ and we parametrize $y=\pi+\theta$ for $\theta\in [-\pi,\pi]$. The closest point of $x$ is either $y=\pi+\theta$ or $y-2\pi=\theta-\pi$. Overall for any $\theta\in [-\pi,\pi]$, we have the following approximation
\begin{equation}\label{eq:F-circle}
\begin{aligned}
\cost^t(x,y) &\simeq t\log(2\pi t)-2t \log\bigg(\exp\bigg(-\frac{(\theta+\pi-x)^2}{2t}\bigg)+\exp\bigg(-\frac{(\theta-\pi-x)^2}{2t}\bigg)\bigg)\\
&= t\log(2\pi t)-2t \log\bigg(2\exp\bigg(-\frac{(\theta-x)^2+\pi^2}{2t}\bigg)\cosh\bigg(\frac{\pi(\theta-x)}{t}\bigg)\bigg)\\
&= t\log\bigg(\frac{\pi t}{2}\bigg)+(\theta-x)^2+\pi^2-2t\log\bigg(\cosh\bigg(\frac{\pi (\theta-x)}{t}\bigg)\bigg).
\end{aligned}
\end{equation}
Taking derivative to $x$ in \eqref{eq:F-circle}, we obtain that
\begin{equation}\label{eq:grad-circle}
    \grad_x(\bms^t_\theta)= \frac{\partial}{\partial x} \cost^t(x,y) = -2(\theta-x)  +2\pi\tanh\bigg(\frac{\pi(\theta-x)}{t}\bigg).
\end{equation}
At $x=0$, by taking $t\to 0^+$, we have
\begin{equation*}
   \grad_0(\bms^t_\theta)\to \begin{cases}
     -2\theta+2\pi,&\theta>0,\\
     0,&\theta=0,\\
     -2\theta-2\pi,&\theta<0.
 \end{cases}  
\end{equation*}
Notice that for $\theta\neq 0$, $\grad_0(\bms^t_\theta)\to\grad_0(\bms^0_\theta)$. For $t$-Varadhan gradients, \Cref{prop:gradient-converge} tells that
\begin{equation*}
\begin{aligned}
       \grad_0(F^0_\mu) &=\lim_{t\to 0^+}\grad_0(F^t_\mu) = \bbe[\lim_{t\to 0^+}\grad_0(\bms^t_\theta)]\\
       &=-2\int_{-\pi}^\pi\theta\psi(\theta+\pi)\diff{\theta}-2\pi\int_0^\pi\psi(\theta+\pi)\diff{\theta}+2\pi\int_{-\pi}^0\psi(\theta+\pi)\diff{\theta}\\
       &= 2\pi-2\int_{0}^{2\pi}\theta\psi(\theta)\diff{\theta}-2\pi\int_\pi^{2\pi}\psi(\theta)\diff{\theta}+2\pi\int_0^{\pi}\psi(\theta)\diff{\theta}\\
       &= 4\pi\int_0^{\pi}\psi(\theta)\diff{\theta}-2\int_{0}^{2\pi}\theta\psi(\theta)\diff{\theta}.
\end{aligned}
\end{equation*}

Next, taking derivative to $x$ in \eqref{eq:grad-circle}, we obtain that
\begin{equation*}
\hess_x(\bms^t_\theta) = \frac{\partial^2}{\partial x^2}\cost^t(x,y) = 2-\frac{2\pi^2}{t}\frac{1}{\cosh^2\big(\pi(\theta-x)/t\big)}.
\end{equation*}
At $x=0$, by taking $t\to 0^+$, we have
\begin{equation*}
   \hess_0(\bms^t_\theta) \to\begin{cases}
    2,&\theta\neq 0,\\
    -\infty, &\theta=0.
\end{cases}
\end{equation*}
Notice that the Hessian of $\bms^t_\theta$ blows up near the cut locus at a rate $O(t^{-1})$.  For any $\delta>0$, by \eqref{eq:Jtdelta} we have
\begin{equation*}
J^{t,\delta}_\mu(0) = \int_{-\delta}^\delta \hess_0(\bms^t_\theta)\psi(\pi+\theta)\diff{\theta}=\int_{-\delta}^\delta\bigg(2-\frac{2\pi^2}{t\cosh^2\big(\pi\theta/t\big)}\bigg)\psi(\pi+\theta)\diff{\theta}.
\end{equation*}
By the mean value theorem, we have
\begin{equation*}
J^{t,\delta}_\mu(0) = 2\mu(\calc_\delta(0))-  2\pi\psi(\pi+\theta_t)\int_{-\delta}^\delta \frac{\pi}{t}\frac{1}{\cosh^2\big(\pi\theta/t\big)}\diff{\theta},
\end{equation*}
where $\theta_t\in(-\delta,\delta)$. It follows that
\begin{equation*}
J^{t,\delta}_\mu(0) = 2\mu(\calc_\delta(0))-4\pi\psi(\pi+\theta_t)\tanh\bigg(\frac{\pi\delta}{t}\bigg).
\end{equation*}
For fixed $\delta>0$, we take limit superior and limit inferior for $t\to 0^+$,
\begin{equation*}
\begin{aligned}
    &\varlimsup_{t\to 0^+}J^{t,\delta}_\mu(0)  = 2\mu(\calc_\delta(0))-4\pi\varliminf_{t\to 0^+}\psi(\pi+\theta_t)\\
    &\varliminf_{t\to 0^+}J^{t,\delta}_\mu(0)  = 2\mu(\calc_\delta(0))-4\pi\varlimsup_{t\to 0^+}\psi(\pi+\theta_t).
\end{aligned}
\end{equation*}
Then we take $\delta\to 0^+$ and we have
\begin{equation*}
J_\mu(0)=\lim_{\delta\to 0^+}\varlimsup_{t\to 0^+}J^{t,\delta}_\mu(0) = \lim_{\delta\to 0^+}\varliminf_{t\to 0^+}J^{t,\delta}_\mu(0)=-4\pi\psi(\pi).
\end{equation*}

\subsubsection{Torus}

Consider the torus $\mathbb{T}^2=(\bbr/2\pi\bbz)^2$. Similar to \eqref{eq:heat-circle}, the heat kernel is given by
\begin{equation*}
K(t,\mathbf{x},\mathbf{y}) = \frac{1}{2\pi t}\sum_{\mathbf{n}\in\bbz^2}\exp\bigg(-\frac{\|\mathbf{y}-\mathbf{x}+2\pi\mathbf{n}\|^2}{2t}\bigg).
\end{equation*}
Here we also consider a small neighborhood of $\mathbf{x}=(0,0)$ and we parametrize $\mathbf{x}=(u,v)$ and $\mathbf{y} = (\pi+\theta,\pi+\omega)$ for $\theta,\omega\in [-\pi,\pi]$. We have 
\begin{equation}\label{eq:F-torus}
\begin{aligned}
&\cost^t(\mathbf{x},\mathbf{y})\\ \simeq{} & 2t\log(2\pi t)- 2t\log\bigg(\exp\bigg(-\frac{(\theta+\pi-u)^2+(\omega+\pi-v)^2}{2t}\bigg)+\exp\bigg(-\frac{(\theta+\pi-u)^2+(\omega-\pi-v)^2}{2t}\bigg)\\
&\phantom{2tlog2pit-2tlogexp}+\exp\bigg(-\frac{(\theta-\pi-u)^2+(\omega+\pi-v)^2}{2t}\bigg)+\exp\bigg(-\frac{(\theta-\pi-u)^2+(\omega-\pi-v)^2}{2t}\bigg)\bigg)\\
={}& 2t\log(2\pi t)-2t \log\bigg(4\exp\bigg(-\frac{(\theta-u)^2+\pi^2}{2t}\bigg)\exp\bigg(-\frac{(\omega-v)^2+\pi^2}{2t}\bigg)\\
&\phantom{2tlogpit-2tlog4exp}\times\cosh\bigg(\frac{\pi(\theta-u)}{t}\bigg)\cosh\bigg(\frac{\pi(\omega-v)}{t}\bigg)\bigg)\\
={}& 2t\log\bigg(\frac{\pi t}{2}\bigg)+2\pi^2+(\theta-u)^2-2t\log\bigg(\cosh\bigg(\frac{\pi(\theta- u)}{t}\bigg)\bigg)+(\omega-v)^2-2t\log\bigg(\cosh\bigg(\frac{\pi(\omega- v)}{t}\bigg)\bigg).
\end{aligned}
\end{equation}
Notice that in \eqref{eq:F-torus} the variables $u$ and $v$ are separated. The computations of gradients and Hessians are exactly the same as the case for circle. At $\mathbf{x}=(0,0)$, we simply write down the results as follows 
\begin{equation}\label{eq:gradhess-torus}
    \begin{aligned}
    &\grad_{\mathbf{0}}(\bms^t_{\theta,\omega}) = \bigg( -2\theta  +2\pi\tanh\big(\pi\theta/t\big),-2\omega  +2\pi\tanh\big(\pi\omega/t\big) \bigg),\\
    &\hess_{\mathbf{0}}(\bms^t_{\theta,\omega}) = \mathrm{diag}\bigg(2-\frac{2\pi^2}{t}\frac{1}{\cosh^2\big(\pi\theta/t\big)}, 2-\frac{2\pi^2}{t}\frac{1}{\cosh^2\big(\pi\omega/t\big)}\bigg).
    \end{aligned}
\end{equation}
Componentwise the limits of $\grad_{\mathbf{0}}(\bms^t_{\theta,\omega})$, $\grad_{\mathbf{0}}(F^t_\mu)$ and $\hess_{\mathbf{0}}(\bms^t_{\theta,\omega})$ as $t\to 0^+$ are identical to those obtained on the circle.

For any $\delta>0$, by \eqref{eq:Jtdelta} we have
\begin{equation*}
J^{t,(\theta,\omega)}_\mu(\mathbf{0}) = \bigg(\int_{-\pi}^\pi\int_{-\delta}^\delta+\int_{-\delta}^\delta\int_{-\pi}^\pi-\int_{-\delta}^\delta\int_{-\delta}^\delta\bigg)\hess_0(\bms^t_{\theta,\omega})\psi(\pi+\theta,\pi+\omega)\diff{\theta}\diff{\omega}.
\end{equation*}
The first diagonal element is given by
\begin{equation*}
    \begin{aligned}
J^{t,(\theta,\omega)}_\mu(\mathbf{0})_{11} = & 2\mu(\calc_\delta(\mathbf{0})) - \underbrace{\int_{-\delta}^\delta \bigg(\frac{2\pi^2}{t}\frac{1}{\cosh^2\big(\pi\theta/t\big)}\bigg)\bigg(\int_{-\pi}^\pi\psi(\pi+\theta,\pi+\omega)\diff{\omega}\bigg)\diff{\theta}}_{\mathrm{(I)}}\\
&-\underbrace{\int_{-\pi}^\pi \bigg(\frac{2\pi^2}{t}\frac{1}{\cosh^2\big(\pi\theta/t\big)}\bigg)\bigg(\int_{-\delta}^\delta\psi(\pi+\theta,\pi+\omega)\diff{\omega}\bigg)\diff{\theta}}_{\mathrm{(II)}}\\
&+\underbrace{\int_{-\delta}^\delta \bigg(\frac{2\pi^2}{t}\frac{1}{\cosh^2\big(\pi\theta/t\big)}\bigg)\bigg(\int_{-\delta}^\delta\psi(\pi+\theta,\pi+\omega)\diff{\omega}\bigg)\diff{\theta}}_{\mathrm{(III)}}
\end{aligned}
\end{equation*}
We analyze (I)--(III) separately:
\begin{itemize}
    \item[(I)] By the mean value theorem, we have
\begin{equation*}
    \mathrm{(I)} = 4\pi\tanh\bigg(\frac{\pi\delta}{t}\bigg)\int_{0}^{2\pi}\psi(\pi+\theta_t,\omega)\diff{\omega},
\end{equation*}
where $\theta_t\in (-\delta,\delta)$.
\item[(II)] Notice that
\begin{equation*}
    \mathrm{(II)} \le 2\delta\|\psi\|_\infty\cdot 4\pi\tanh\bigg(\frac{\pi^2}{t}\bigg)\le 8\pi\delta\|\psi\|_\infty.
\end{equation*}
Therefore as $\delta\to 0^+$, $\mathrm{(II)}\to 0$.
\item[(III)]  Using the same argument as (II) we also have $\mathrm{(III)}\to 0$ as $\delta\to 0^+$.
\end{itemize}
In summary we have shown that
\begin{equation*}
J_\mu(\mathbf{0})_{11}=\lim_{\delta\to 0^+}\varlimsup_{t\to 0^+}J^{t,(\theta,\omega)}_\mu(\mathbf{0})_{11} = \lim_{\delta\to 0^+}\varliminf_{t\to 0^+}J^{t,(\theta,\omega)}_\mu(\mathbf{0})_{11} = -4\pi\int_{0}^{2\pi} \psi(\pi,\omega)\diff(\omega).
\end{equation*}
The limit of the second diagonal element can be obtained in a similar way. Therefore we have
\begin{equation*}
    J_\mu(\mathbf{0})=-4\pi\begin{bmatrix}
        \displaystyle\int_0^{2\pi}\psi(\pi,\omega)\diff{\omega} & 0\\
        0 & \displaystyle\int_0^{2\pi}\psi(\theta,\pi)\diff{\theta}
    \end{bmatrix}.
\end{equation*}






\section{Discussion}

In this paper, we introduced notions of Varadhan functions, variances, and means and analyzed their asymptotic behaviors. We first established ULLNs for all three objects. We then proved CLTs for Varadhan functions and variances for each fixed $t\ge 0$ and for Varadhan means for each fixed $t>0$. A key difference is that the CLT for Varadhan means requires the Hessian of the Varadhan function which may not exist when $t=0$.

To address this problem, we further studied the small time asymptotics of the gradients and Hessians of the Varadhan functions. In particular, we showed that the limit of the Hessian as 
$t\to 0^+$ gives rise to an integral near the cut locus. Remarkably, explicit computations on the circle and the torus showed that this term coincides with the “non-standard” term appearing in the CLT for \fre means obtained in \cite{hotz2024central}.

The intrinsic smoothness of the Varadhan-type statistics endows them with strong theoretical guarantees.  We expect this work to serve as a starting point for future research on Varadhan-type statistics and their applications.

\bibliographystyle{chicago}
\bibliography{references}
\newpage

\begin{appendices}

\section{Technical Results from Riemannian Geometry}

\subsection{Taylor Expansion of Vector Fields}



Fix a base point $x_\star\in\calm$. For any vector field $Z$, we define the following map
\begin{equation}\label{eq:root-vec-field}
\begin{aligned}
  \Pi_{x_\star}Z:\calm &\to T_{x_\star}\calm\\
  x&\mapsto \Pi_{x\to x_\star}(Z_x),  
\end{aligned}
\end{equation}
where $\Pi_{x\to x_\star}:T_x\calm\to T_{x_\star}\calm$ is the parallel transport operator. By fixing a basis at $T_{x_\star}\calm$, the map $\Pi_Z$ can be viewed as a map from $\calm\to\bbr^m$. We have the following first order Taylor expansion of $\Pi_{x_\star}Z$.

\begin{lemma}\label{lemma:taylor-expan}
    Let $Z$ be a smooth vector field on $\calm$ and $\Pi_{x_\star}Z$ be defined as in \cref{eq:root-vec-field}. For any $x$ in a geodesically convex neighborhood of $x_\star$, we have
    \begin{equation}\label{eq:parallel}
    (\Pi_{x_\star}Z)(x)=Z_{x_\star}+(\nabla Z)_{x_\star}(\rlog{x_\star}{x})+O(\dm^2(x,x_\star)).
    \end{equation}
    In particular, if $Z=\grad(f)$ is the gradient of a smooth function $f$, then
    $$
    (\Pi_{x_\star}\grad(f))(x) = \grad_{x_\star}(f) + \hess_{x_\star}(f)(\rlog{x_\star}{x}) + O(\dm^2(x,x_\star)).
    $$
\end{lemma}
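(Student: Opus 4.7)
The plan is to parametrize the segment from $x_\star$ to $x$ by the minimizing geodesic $\gamma(s)=\rexp_{x_\star}(s v)$ with $v=\rlog{x_\star}{x}$, so that $\|v\|=\dm(x,x_\star)$ and $\gamma(0)=x_\star$, $\gamma(1)=x$. Let $\tau_s:T_{x_\star}\calm\to T_{\gamma(s)}\calm$ denote parallel transport along $\gamma$, so that $\Pi_{\gamma(s)\to x_\star}=\tau_s^{-1}$. Define the $T_{x_\star}\calm$-valued curve
\begin{equation*}
\phi(s)=\tau_s^{-1}\bigl(Z_{\gamma(s)}\bigr),\qquad s\in[0,1],
\end{equation*}
so that $\phi(1)=(\Pi_{x_\star}Z)(x)$ and $\phi(0)=Z_{x_\star}$. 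The goal is a first order Taylor expansion of $\phi$ at $s=0$ with a uniform quadratic remainder.

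The first key step is the standard identity that parallel transport commutes with the covariant derivative along $\gamma$: for any smooth vector field $Y$ on $\calm$,
\begin{equation*}
\frac{d}{ds}\,\tau_s^{-1}\bigl(Y_{\gamma(s)}\bigr)=\tau_s^{-1}\bigl(\nabla_{\gamma'(s)}Y\bigr).
\end{equation*}
Since $\gamma'(s)=\tau_s(v)$ is parallel along $\gamma$, applying this identity to $Z$ gives $\phi'(s)=\tau_s^{-1}(\nabla_{\tau_s(v)}Z)$, and in particular $\phi'(0)=(\nabla Z)_{x_\star}(v)=(\nabla Z)_{x_\star}(\rlog{x_\star}{x})$. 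Differentiating once more and using that $\tau_s(v)$ is parallel,
\begin{equation*}
\phi''(s)=\tau_s^{-1}\bigl(\nabla_{\tau_s(v)}\nabla_{\tau_s(v)}Z\bigr).
\end{equation*}

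Next I would bound this second derivative uniformly. Fix a compact geodesically convex neighborhood $U$ of $x_\star$ containing $x$; then $\nabla^2 Z$ is a smooth $(1,2)$-tensor on the compact closure of $U$, hence $\|\nabla_{\tau_s(v)}\nabla_{\tau_s(v)}Z\|\le C\|\tau_s(v)\|^2=C\|v\|^2$ for some constant $C=C(U,Z)$, using that $\tau_s$ is a linear isometry. Therefore $\|\phi''(s)\|\le C\,\dm^2(x,x_\star)$ uniformly for $s\in[0,1]$ and for $x$ in $U$. Taylor's theorem with integral remainder in the fixed vector space $T_{x_\star}\calm$ then yields
\begin{equation*}
\phi(1)=\phi(0)+\phi'(0)+\int_0^1(1-s)\phi''(s)\,ds=Z_{x_\star}+(\nabla Z)_{x_\star}(\rlog{x_\star}{x})+O(\dm^2(x,x_\star)),
\end{equation*}
which is the claimed expansion. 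The specialization to $Z=\grad(f)$ is immediate once we recall the identification $\nabla\grad(f)=\hess(f)$ as a $(1,1)$-tensor, so that $(\nabla\grad(f))_{x_\star}(\rlog{x_\star}{x})=\hess_{x_\star}(f)(\rlog{x_\star}{x})$.

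There is no real obstacle here; the only point that deserves care is ensuring the remainder constant is uniform on the neighborhood, which is where the geodesic convexity and compactness of $U$ enter (they guarantee both the existence of the minimizing geodesic $\gamma$ and the boundedness of $\nabla^2 Z$ along it). Everything else is the standard fact that parallel transport along $\gamma$ intertwines $d/ds$ with $\nabla_{\gamma'(s)}$.
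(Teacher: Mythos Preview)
Your proof is correct and follows essentially the same route as the paper: define the curve $\phi(s)=\Pi_{\gamma(s)\to x_\star}(Z_{\gamma(s)})$ in the fixed vector space $T_{x_\star}\calm$, identify $\phi'(0)$ with $(\nabla Z)_{x_\star}(v)$ via the standard relation between parallel transport and covariant differentiation, and then Taylor expand. If anything, you are more careful than the paper, which simply writes the expansion with an $O(\|v\|^2)$ remainder, whereas you explicitly compute $\phi''(s)=\tau_s^{-1}(\nabla_{\tau_s(v)}\nabla_{\tau_s(v)}Z)$ and bound it using compactness to make the remainder uniform on the neighborhood.
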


\begin{proof}
    Denote $v=\rlog{x_\star}{x}$, hence $\gamma(s)=\rexp_{x_\star}(sv)$ is the unique geodesic connecting $x_\star$ and $x$. Notice that parallel transport and covariant differentiation are related by the following \citep[Section 5.1]{petersen2006riemannian}:
    \begin{equation*}
    \nabla_{x_\star}Z(v) = \lim_{s\to 0}\frac{\Pi_{\gamma(s)\to\gamma(0)}Z-Z(x_\star)}{s}.
    \end{equation*}
    Since $(\Pi_{x_\star}Z)(\gamma(s))$ is a smooth curve in the vector space $T_{x_\star}\calm$, we have
    \begin{equation*}
    \begin{aligned}
       (\Pi_{x_\star}Z)(x)=&(\Pi_{x_\star}Z)(\gamma(1))\\
       =&(\Pi_{x_\star}Z)(\gamma(0))+\frac{\mathrm{d}}{\mathrm{d}s}\bigg|_{s=0}(\Pi_{x_\star}Z)(\gamma(s))+O(\|v\|^2)\\
       =&Z(x_\star)+\nabla_{x_\star}Z(v)+O(\dm^2(x,x_\star)),
    \end{aligned}
    \end{equation*}
    where the last step follows from the fact that $(\Pi_{x_\star}Z)(x_\star)=Z_{x_\star}$. 
\end{proof}

\subsection{Small Time Asymptotics of Logarithmic Heat Kernels}\label{app:log-derivative}

Let $\mathrm{Cut}_\calm=\{(x,y)\in\calm\times\calm:x\in\mathrm{Cut}_y\}$.  Away from $\mathrm{Cut}_\calm$, we have the following Minakshisundaram--Pleijel expansion of the heat kernel $K(t,x,y)$ \citep[Theorem 5.1.1]{hsu2002stochastic}:
\begin{equation*}
    K(t,x,y) \simeq \frac{1}{(2\pi t)^{m/2}} e^{-\frac{\dm^2(x,y)}{2t}} 
\sum_{i=0}^\infty a_i(x,y) t^i,
\end{equation*}
where $a_i(x,y)$'s are smooth functions defined on $\calm\times\calm\backslash\mathrm{Cut}_\calm$ and the asymptotic expansion holds uniformly on any compact subsets of $\calm\times\calm\backslash\mathrm{Cut}_\calm$ as $t\to 0^+$. By taking logarithm we have 
\begin{equation}\label{eq:log-expansion}
\begin{aligned}
    \cost^t(x,y) &\simeq mt\log(2\pi t) +\dm^2(x,y) -2t\log(a_0(x,y))-2t\log\bigg(1+\sum_{i=1}^\infty \frac{a_i(x,y)}{a_0(x,y)}t^i\bigg)\\
    & = mt\log(2\pi t) +\dm^2(x,y) -2t\log(a_0(x,y)) + O(t^2).
\end{aligned}
\end{equation}

We immediately have the following divergence rate estimate for temporal derivatives of $\cost^t(x,y)$.
\begin{lemma}
    Suppose $\calm$ is a compact Riemannian manifold. As $t\to 0^+$,
    \begin{equation*}
        \min_{x,y\in\calm}|\partial_t\cost^t(x,y)| \gtrsim -m\log t
    \end{equation*}
\end{lemma}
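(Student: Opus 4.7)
The plan is to differentiate the Minakshisundaram--Pleijel expansion \eqref{eq:log-expansion} term-by-term in $t$ and read off the logarithmic divergence. The essential algebraic observation is that the squared-distance term $d_\calm^2(x,y)$ in \eqref{eq:log-expansion} is independent of $t$, so its time derivative vanishes identically; the only remaining $t$-dependent piece producing an unbounded derivative is the universal prefactor $mt\log(2\pi t)$, whose derivative $m(\log(2\pi t)+1)$ diverges like $m\log t$ as $t\to 0^+$. Equivalently, the would-be singular $d_\calm^2/t$ contribution to $-2\log K$ is cancelled out by the matching $d_\calm^2/t$ piece of $-2t\,\partial_t\log K$, so no such singularity appears in $\partial_t\cost^t$.

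First I would justify differentiating \eqref{eq:log-expansion} in $t$. Since the coefficients $a_i(x,y)$ depend only on $(x,y)$ and $\partial_t K=\tfrac{1}{2}\Delta_g K$ admits an analogous Minakshisundaram--Pleijel expansion obtained by applying $\tfrac{1}{2}\Delta_g$ termwise to the expansion for $K$, the resulting expansion for $\partial_t\cost^t$ is legitimate and yields
\[
\partial_t \cost^t(x,y) = m\log(2\pi t) + m - 2\log a_0(x,y) + O(t)
\]
as $t\to 0^+$, uniformly on compact subsets of $\calm\times\calm\setminus\mathrm{Cut}_\calm$. I would then specialize to the diagonal $\{(x,x):x\in\calm\}$, which lies entirely outside $\mathrm{Cut}_\calm$ and on which the standard identity $a_0(x,x)\equiv 1$ holds, giving
\[
\partial_t\cost^t(x,x) = m\log(2\pi t) + m + O(t)
\]
with the $O(t)$ remainder uniform in $x$ by compactness of $\calm$. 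Taking absolute values yields $|\partial_t\cost^t(x,x)|\ge -m\log t - C$ for some constant $C$ and all sufficiently small $t$.

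The main obstacle is extending this lower bound uniformly from the diagonal to all of $\calm\times\calm$, including points on the cut locus, where the single-geodesic Minakshisundaram--Pleijel expansion breaks down and $\log a_0(x,y)$ may become unbounded. I would handle this by invoking a multi-geodesic refinement of the expansion in which the leading coefficient is replaced by the positive sum $\sum_{\gamma}a_{0,\gamma}(x,y)$ over all minimizing geodesics from $x$ to $y$ — this preserves the $-\tfrac{m}{2}\log t$ leading-order behavior of $\log K$ and hence the same cancellation of the $d_\calm^2/t$ terms. Alternatively, I would combine the classical two-sided Gaussian heat kernel bounds with Li--Yau-type gradient estimates on $\partial_t\log K$ to establish the asymptotic $\partial_t\cost^t(x,y)=m\log t+O(1)$ uniformly on $\calm\times\calm$, yielding the asserted bound $\min_{x,y}|\partial_t\cost^t(x,y)|\gtrsim -m\log t$.
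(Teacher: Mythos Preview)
Your core step --- differentiating the Minakshisundaram--Pleijel expansion \eqref{eq:log-expansion} in $t$ to obtain $\partial_t\cost^t(x,y)=m\log(2\pi t)+m-2\log a_0(x,y)+O(t)$ --- is exactly the paper's proof, which consists of the single sentence ``Taking derivative to $t$ in \eqref{eq:log-expansion}.''

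Two comments on what you add beyond this. First, the specialization to the diagonal is a detour that does not advance the argument: the lemma asks for a lower bound on $\min_{x,y}|\partial_t\cost^t|$, and a bound valid only on $\{(x,x)\}$ says nothing about that minimum. The differentiated expansion you already wrote down gives the required lower bound directly on every compact subset of $\calm\times\calm\setminus\mathrm{Cut}_\calm$, with the constant absorbing $\sup|\log a_0|$ there; no diagonal step is needed. Second, you correctly identify that the cut locus is the only genuine obstacle, and here you are being more careful than the paper, which simply ignores it. Your proposed remedies (multi-geodesic expansions, Li--Yau estimates) are reasonable heuristics, though making them rigorous near conjugate points is nontrivial. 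It is worth noting that for the paper's actual purpose --- explaining why the Lipschitz-in-$t$ technique fails to make $\bigcup_t\calv^t$ Donsker --- it would suffice to exhibit blow-up at \emph{some} pair, and your diagonal computation does exactly that; so the $\min_{x,y}$ in the lemma is arguably stronger than what the one-line proof establishes or than what is subsequently used.
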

\begin{proof}
    Taking derivative to $t$ in \eqref{eq:log-expansion}.
\end{proof}

For spatial derivatives of $\cost^t(\cdot,y)=\bms^t_y(\cdot)$, the following uniform convergence result away from cut locus can be found in \cite{neel2006small}.

\begin{lemma}{\citep[Lemma 2]{neel2006small}}\label{lemma:derivative-convergence}
    Let $\calm$ be a compact Riemannian manifold. There are smooth functions $b_i(x,y)$'s defined on $\calm\times\calm\backslash\mathrm{Cut}_\calm$ such that for any positive integer $k$, the asymptotic expansion
    \begin{equation*}
        \nabla^k_x \bms^t_y\simeq \nabla^k_x\bms^0_y + \sum_{i=1}^\infty b_i(x,y) t^i
    \end{equation*}
    holds uniformly as $t\to 0^+$ for any compact subset of $\calm\times\calm\backslash\mathrm{Cut}_\calm$.
\end{lemma}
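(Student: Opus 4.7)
The plan is to derive the expansion directly from the Minakshisundaram--Pleijel parametrix for the heat kernel, upgrade it to a $C^k$-uniform statement off the cut locus, and then take $-2t\log$ and differentiate $k$ times in $x$.

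Step 1 (parametrix with derivatives). On any compact $\Omega \subset \calm\times\calm\setminus\mathrm{Cut}_\calm$, start from the classical expansion
\begin{equation*}
K(t,x,y) \;=\; \frac{1}{(2\pi t)^{m/2}}\, e^{-\dm^2(x,y)/(2t)}\sum_{i=0}^{N} a_i(x,y)\, t^i \;+\; R_N(t,x,y),
\end{equation*}
with $a_i\in C^\infty(\calm\times\calm\setminus\mathrm{Cut}_\calm)$ and $a_0(x,x)=1$. The sharper fact I would invoke is the classical parametrix remainder bound in $C^k$-norm: for every $k,N\in\bbn$,
\begin{equation*}
\sup_{(x,y)\in\Omega}\bigl|\nabla^k_x R_N(t,x,y)\bigr| \;=\; O\bigl(t^{N+1-(m+k)/2}\bigr)\quad\text{as } t\to 0^+.
\end{equation*}
This version of the estimate is obtained either by differentiating Duhamel's formula for $R_N$ (which expresses $R_N$ as the heat semigroup applied to a smooth source supported away from the diagonal) or by interior parabolic regularity applied to the caloric residual; either way the parametrix expansion of $K$ is therefore differentiable to all orders in $x$, uniformly on $\Omega$.

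Step 2 (log expansion). Since $a_0$ is continuous and positive on $\calm\times\calm\setminus\mathrm{Cut}_\calm$, it is bounded below by some $c>0$ on $\Omega$, so for $t$ small enough $\sum_{i\ge 0}a_i t^i \ge \tfrac{1}{2}a_0>0$ on $\Omega$. Hence
\begin{equation*}
\bms^t_y(x) \;=\; mt\log(2\pi t) + \dm^2(x,y) - 2t\log a_0(x,y) - 2t\log\!\Big(1+\sum_{i\ge 1}\tfrac{a_i(x,y)}{a_0(x,y)}t^i\Big).
\end{equation*}
Applying the power series $\log(1+u)=\sum_{j\ge 1}(-1)^{j+1}u^j/j$ with $u=\sum_{i\ge1}(a_i/a_0)t^i$ and collecting powers of $t$ gives a formal expansion $\bms^t_y(x) \simeq \bms^0_y(x) + mt\log(2\pi t) + \sum_{i\ge 1} c_i(x,y)\,t^i$ whose coefficients $c_i$ are smooth on $\calm\times\calm\setminus\mathrm{Cut}_\calm$.

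Step 3 ($\nabla^k_x$ and uniform remainder). For any $k\ge 1$, the term $mt\log(2\pi t)$ is killed by $\nabla^k_x$ since it does not depend on $x$, the term $\dm^2(x,y)$ yields the leading contribution $\nabla^k_x\bms^0_y$, and differentiating the remaining series gives $\sum_{i\ge 1}b_i(x,y)t^i$ with $b_i := \nabla^k_x c_i$ smooth on $\calm\times\calm\setminus\mathrm{Cut}_\calm$. To upgrade the formal statement to a uniform asymptotic, truncate the parametrix at order $N$, apply the chain rule to $-2t\log K = -2t\log(H_N + R_N)$, and expand around $H_N$ using Step 2; differentiating $k$ times in $x$ expresses $\nabla^k_x\bms^t_y$ minus the partial sum $\nabla^k_x\bms^0_y+\sum_{i=1}^{M}b_i t^i$ (for any chosen $M\le N$) as a finite sum of terms in $\nabla^j_x R_N$ for $j\le k$ divided by powers of $K$. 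Since $K$ is bounded below on $\Omega$ for small $t$, the $C^k$ parametrix estimate from Step 1 controls these by $O(t^{M+1})$, which is the desired uniform asymptotic.

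The main obstacle I expect is Step 1, namely the upgrade of the classical scalar parametrix estimate to its $C^k$-version uniformly on compact subsets of the complement of the cut locus. The coefficients $a_i$ and the remainder $R_N$ are only defined and smooth away from $\mathrm{Cut}_\calm$, and the constants implicit in the $O$-bound blow up as $\Omega$ approaches $\mathrm{Cut}_\calm$; care is required to commute $\nabla^k_x$ with the Duhamel integral, or alternatively to feed $R_N$ into interior parabolic Schauder estimates on a slightly larger compact subset. Once Step 1 is in hand, Steps 2--3 are essentially bookkeeping with power series in $t$ and smooth functions on $\Omega$.
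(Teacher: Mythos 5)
First, note that the paper does not prove this lemma at all: it is imported verbatim as \citep[Lemma 2]{neel2006small} (building on Malliavin--Stroock), so you are attempting a proof where the paper offers only a citation. Your Steps 2--3 correctly describe the formal bookkeeping by which the expansion of $\bms^t_y=-2t\log K$ follows from an expansion of $K$, but Step 1 contains a genuine gap, and Step 3 contains the error that exposes it. You write ``Since $K$ is bounded below on $\Omega$ for small $t$'': this is false off the diagonal. For $x\neq y$ one has $K(t,x,y)\asymp t^{-m/2}e^{-\dm^2(x,y)/2t}\to 0$ exponentially fast, so the terms $\nabla^j_x R_N/K^p$ appearing after differentiating the logarithm carry a factor $e^{p\,\dm^2(x,y)/2t}$. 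Consequently the \emph{absolute} parametrix remainder bound $\sup_\Omega|\nabla^k_xR_N|=O(t^{N+1-(m+k)/2})$ that you invoke as ``classical'' is useless on any compact set bounded away from the diagonal: $R_N/H_N\sim t^{N+1}e^{\dm^2/2t}\to\infty$, so you cannot conclude $\log(H_N+R_N)=\log H_N+o(1)$, let alone control $k$ derivatives of it.

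What the lemma actually requires is the \emph{relative} (Gaussian-weighted) estimate
\begin{equation*}
\sup_{(x,y)\in\Omega}\Bigl|\nabla^k_x R_N(t,x,y)\Bigr| \;=\; O\Bigl(t^{N+1-(m+k)/2}\,e^{-\dm^2(x,y)/2t}\Bigr),
\end{equation*}
i.e.\ that the remainder is exponentially small at the same Gaussian rate as the leading term, uniformly on compacts of $\calm\times\calm\backslash\mathrm{Cut}_\calm$ and stable under differentiation. This is not delivered by a naive Duhamel iteration or by interior parabolic Schauder estimates (both give absolute bounds, and the Duhamel representation of $\nabla_x^kR_N$ involves $\nabla_x^kK(t-s,x,z)$ integrated over all of $\calm$, including regions where the Gaussian weight you need is tiny); it is precisely the hard content of Molchanov's method and of the Malliavin--Stroock/Neel--Stroock analysis that the lemma cites, typically established via Brownian bridge decompositions or Varadhan-type large deviation localization. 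So the difficulty is not, as you suggest, merely ``commuting $\nabla^k_x$ with the Duhamel integral'' or constants degenerating near the cut locus --- it is that the entire estimate must be upgraded from additive to multiplicative relative to $e^{-\dm^2/2t}$ before your Steps 2--3 can even begin. A minor further point: your $b_i=\nabla^k_xc_i$ depend on $k$, whereas the statement fixes one family $b_i$; this is cosmetic but worth flagging.
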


For gradients and Hessians of $\bms^t_y$, global estimates including cut locus can be found in
\cite{chen2023logarithmic}.

\begin{theorem}{\citep[Theorem 2.1]{chen2023logarithmic}}\label{thm:log-estimate}
   Suppose $\calm$ is a compact Riemannian manifold. For any $x,y\in\calm$ and $t\in (0,1]$,
       \begin{equation}
           \begin{aligned}
               &\|\grad_x\bms^t_y\|_{T_x\calm} \lesssim \sqrt{t}+\dm(x,y),\\
               &\|\hess_x\bms^t_y\|_{T_x\calm\otimes T_x\calm} \lesssim 1+\frac{\dm^2(x,y)}{t}.
           \end{aligned}
       \end{equation}
\end{theorem}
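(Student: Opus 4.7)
The plan is to reduce the two stated bounds to classical pointwise estimates on the heat kernel and its first two spatial derivatives, and then unfold everything through the logarithm. Direct differentiation of $\bms^t_y(x)=-2t\log K(t,x,y)$ gives
\[
\grad_x\bms^t_y=-2t\,\frac{\grad_x K(t,x,y)}{K(t,x,y)},\qquad \hess_x\bms^t_y=-2t\left(\frac{\hess_x K(t,x,y)}{K(t,x,y)}-\frac{\grad_x K\otimes\grad_x K}{K^2}\right),
\]
so both claims reduce to establishing, for $t\in(0,1]$,
\[
\frac{|\grad_x K|}{K}\lesssim \frac{1}{\sqrt{t}}+\frac{\dm(x,y)}{t},\qquad \frac{|\hess_x K|}{K}\lesssim \frac{1}{t}+\frac{\dm^2(x,y)}{t^2},
\]
since multiplying by $2t$ then yields the claimed envelopes $\sqrt{t}+\dm(x,y)$ and $1+\dm^2(x,y)/t$.

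Next I would combine two standard ingredients. First, the two-sided Gaussian bound on a compact manifold with Ricci curvature bounded below, namely $c_1 t^{-m/2}e^{-C\dm^2(x,y)/t}\le K(t,x,y)\le c_2 t^{-m/2}e^{-c\dm^2(x,y)/t}$, which follows from the Li--Yau parabolic Harnack inequality (lower bound) and the Davies estimate (upper bound). Second, matching Gaussian-type envelopes for the spatial derivatives of $K$:
\[
|\grad_x K|\lesssim \Big(\tfrac{1}{\sqrt{t}}+\tfrac{\dm}{t}\Big)t^{-m/2}e^{-c'\dm^2/t},\qquad |\hess_x K|\lesssim \Big(\tfrac{1}{t}+\tfrac{\dm^2}{t^2}\Big)t^{-m/2}e^{-c'\dm^2/t}.
\]
These can be produced either analytically, by differentiating the semigroup decomposition $K(t,x,y)=\int K(t/2,x,z)K(t/2,z,y)\diff\vol(z)$ in the first factor and combining the Minakshisundaram--Pleijel expansion with the Gaussian upper bound, or probabilistically, via the Bismut--Driver integration by parts formula $\grad_x\bbe^x[f(X_t)]=\tfrac{1}{t}\bbe^x[f(X_t)\int_0^t U_s\diff B_s]$ and its second-order analogue. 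Dividing these derivative bounds by the lower heat kernel bound should collapse the exponentials and leave precisely the desired polynomial prefactor.

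The main obstacle is matching the Gaussian constant $c'$ in the derivative upper bounds to the constant $C$ in the heat kernel lower bound: if $c'<C$, the ratio retains a factor $e^{(C-c')\dm^2/t}$ which destroys the polynomial estimate as $\dm^2/t\to\infty$, and this mismatch is particularly delicate near $\mathrm{Cut}_\calm$ where the on-diagonal asymptotic expansion is not available. The cleanest remedy is to use the Bismut formula applied to the Brownian bridge conditioned on $X_t=y$, in which case Cauchy--Schwarz against the bridge variance produces exactly the polynomial envelope $t^{-1/2}+\dm/t$ with no leftover Gaussian factor; iterating the integration by parts then yields the Hessian envelope $t^{-1}+\dm^2/t^2$. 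The cross term $|\grad_x K|^2/K^2$ is controlled by squaring the gradient ratio and is absorbed into the same Hessian envelope. Combining everything and multiplying by $2t$ gives the stated gradient and Hessian bounds uniformly in $x,y\in\calm$ and $t\in(0,1]$.
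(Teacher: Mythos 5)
This statement is not proved in the paper at all: it is imported verbatim as \cite[Theorem 2.1]{chen2023logarithmic}, so there is no internal argument to compare against. Judged as a standalone proof attempt, your outline is pointed in the right direction and, in fact, mirrors the stochastic-analysis route of the cited reference: reduce to bounds on $|\grad_x K|/K$ and $|\hess_x K|/K$, observe that naively dividing Gaussian derivative upper bounds by a Gaussian lower bound fails because of the mismatch in the exponential constants, and repair this with a Bismut-type representation of $\grad_x\log K$ conditioned on the Brownian bridge, for which Cauchy--Schwarz yields the polynomial envelope $t^{-1/2}+\dm(x,y)/t$ with no leftover Gaussian factor. Identifying that obstruction and its remedy is the essential idea. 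Two caveats. First, for the gradient bound there is a shorter classical path you do not mention: Hamilton's logarithmic gradient estimate $t|\nabla\log u|^2\le(1+2Kt)\log(A/u)$ combined with two-sided Gaussian bounds gives $|\nabla_x\log K|\lesssim t^{-1/2}+\dm/t$ directly, because the constant mismatch only enters inside a logarithm. Second, and more seriously, the Hessian half of your argument is where essentially all of the difficulty of the cited theorem lives, and ``iterating the integration by parts'' is not a proof: the second-order Bismut--Elworthy--Li/Driver formula for $\hess_x\log K$ contains curvature terms and iterated stochastic integrals whose conditional moments must be estimated uniformly in $y$, including across $\mathrm{Cut}_\calm$ where the Minakshisundaram--Pleijel expansion is unavailable; controlling these uniformly for $t\in(0,1]$ is precisely the content of the cited result. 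So your proposal is a correct and well-motivated reduction plus a correct choice of machinery, but it stops short of the substantive estimate it would need to establish.
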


\section{Background on Empirical Process Theory}

In this section we introduce some background from empirical process theory which will be needed later in the proofs of central limit theorems. We will not state the definitions and theorems in full generality. Especially, in empirical process theory a main concern is \emph{measurability}, which, in our paper, will not be a problem. We refer to \cite{kosorok2008introduction,van1996weak,dudley2014uniform} for the full details.  

\subsection{Donsker Classes}\label{app:donsker}

Let $(\calx,\Sigma_\calx,\mu)$ be a probability space. Suppose $\xi_1,\ldots,\xi_n$ are i.i.d. samples with law $\mu$. The empirical measure is  $\mu_n=n^{-1}\sum_{i=1}^n\delta_{\xi_i}$. Let $\calf$ be any class of real-valued functions over $\calx$. The empirical process indexed by $\calf$ is defined as 
$$
\nu_n(f) = \sqn\bigg(\int_\calx f\diff{\mu_n}-\int_\calx f\diff{\mu}\bigg)= \sqn(\mu_n-\mu)(f), \,\forall f\in\calf.
$$
If $|\nu_n(f)|<\infty$ for all $f\in\calf$, then $\nu_n$ can be viewed as a random element in the space of bounded real-valued functions $\ell^\infty(\calf)$ equipped with the following uniform norm
$$
\|\varphi\|_\calf = \sup_{f\in\calf}|\varphi(f)|,\,\forall\varphi\in\ell^\infty(\calf).
$$
In general $\nu_n$ may not be Borel measurable in $\ell^\infty(\calf)$. The primary goal of empirical process theory is to resolve the non-measurability problem and make sense of weak convergence of $\nu_n$ to a limiting process which is often a Gaussian process. 

Consider the space  of measurable, square-integrable functions $L^2(\mu)$. There exists a Gaussian process $G_\mu$ indexed by $L^2(\mu)$ with zero mean and covariance structure 
$$
\bbe[G_\mu(f)G_\mu(g)]=\int fg\dmu-\bigg(\int f\dmu\bigg)\bigg(\int g\dmu\bigg).
$$
The covariance structure defines a pseudometric on $L^2(\mu)$ by 
\begin{equation}\label{eq:rho-mu}
\rho_\mu(f,g) = \big(\bbe\big[\big(G_\mu(f)-G_\mu(g)\big)^2\big]\big)^{1/2}.
\end{equation}
In general the sample paths of $G_\mu$ are not continuous on the whole space $L^2(\mu)$. We are concerned with a subclass of $L^2(\mu)$ with certain sample-continuity. 

\begin{definition}
    A function class $\calf\subseteq L^2(\mu)$ is called pregaussian if a Gaussian process $(f,\omega)\mapsto G_\mu(f)(\omega)$ can be defined on some probability space such that for each $\omega$, $f\mapsto G_\mu(f)(\omega)$ is bounded and uniformly continuous for $\rho_\mu$ from $\calf$ to $\bbr$. 
\end{definition}

\begin{definition}[Donsker Classes]
   A function class $\calf\subseteq L^2(\mu)$ is called a Donsker class for $\mu$, or $\mu$-Donsker classs, if 
   \begin{enumerate}
       \item $\calf$ is pregaussian;
       \item $\nu_n$ converges weakly to the Gaussian process  $G_\mu$, in the sense that for any bounded continuous function $h:\ell^\infty(\calf)\to\bbr$,
       $$
       \bbe^*[h(\nu_n)]\to\bbe[h(G_\mu)],\,\text{ as }n\to\infty, 
       $$
       where $\bbe^*$ denotes the outer expectation. 
   \end{enumerate}
\end{definition}

The following characterization of Donsker classes is critical in our proof \citep[Theorem 3.7.2]{dudley2014uniform}.

\begin{theorem}\label{thm:equi-continuity}
    A function class $\calf\subseteq L^2(\mu)$ is $\mu$-Donsker if and only if
    \begin{enumerate}
        \item $\calf$ is totally bounded for $\rho_\mu$ defined as \cref{eq:rho-mu};
        \item $\calf$ satisfies the asymptotic equicontinuity condition: For any $\epsilon>0$, 
        \begin{equation}\label{eq:equi-contiuity}
        \lim_{\delta\to 0^+}\limsup_{n\to\infty}\bbp^*\bigg[\sup_{\substack{f,g\in\calf\\\rho_\mu(f,g)<\delta}}|\nu_n(f)-\nu_n(g)|>\epsilon\bigg]=0,
        \end{equation}
        where $\bbp^*$ denotes the outer probability.
    \end{enumerate}
\end{theorem}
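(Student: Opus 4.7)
The plan is to invoke the general framework for weak convergence of stochastic processes in the non-separable Banach space $\ell^\infty(\calf)$, due to Hoffmann--J\o rgensen. The abstract principle states that weak convergence in $\ell^\infty(\calf)$ to a tight limit is equivalent to convergence of all finite-dimensional marginals together with asymptotic tightness of the sequence, and that asymptotic tightness of a sequence of maps into $\ell^\infty(\calf)$ is equivalent, via an Arzela--Ascoli-type argument, to the existence of a semimetric for which the index set is totally bounded and the processes are asymptotically uniformly equicontinuous in (outer) probability. The Donsker property therefore decomposes exactly into the two stated conditions, and the proof is organized around showing each direction of this decomposition.

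For the forward implication, assume $\nu_n\wto G_\mu$. By the Portmanteau theorem, $\{\nu_n\}$ is asymptotically tight, and by pregaussianity the limit $G_\mu$ is supported on the space $C_u(\calf,\rho_\mu)$ of bounded $\rho_\mu$-uniformly continuous functions. Tightness of a Borel probability measure with support contained in a set of $\rho_\mu$-uniformly continuous functions forces $(\calf,\rho_\mu)$ to be totally bounded, since otherwise sample paths could exhibit unbounded oscillation on a $\rho_\mu$-small ball. To obtain the equicontinuity condition, apply the continuous mapping theorem to the functional $\omega_\delta\colon h\mapsto \sup_{\rho_\mu(f,g)<\delta}|h(f)-h(g)|$, which is continuous at points of $C_u(\calf,\rho_\mu)$; this yields $\omega_\delta(\nu_n)\wto \omega_\delta(G_\mu)$, and the uniform continuity of the paths of $G_\mu$ gives $\omega_\delta(G_\mu)\to 0$ in probability as $\delta\to 0^+$. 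A Markov-type estimate with the outer probability then converts this into \eqref{eq:equi-contiuity}.

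For the converse, convergence of finite-dimensional marginals is immediate: for any $f_1,\ldots,f_k\in\calf$, the classical multivariate CLT applied to the i.i.d.\ vectors $X_j=(f_i(\xi_j)-\mu(f_i))_{i=1}^k$ yields $(\nu_n(f_i))_{i=1}^k\wto \mathcal{N}(0,\Gamma)$ with $\Gamma_{ij}=\cov_\mu(f_i,f_j)$. The harder step is to upgrade this to weak convergence in $\ell^\infty(\calf)$. Fix $\delta>0$ and choose a finite $\delta$-net $\{f_1,\ldots,f_{N(\delta)}\}$ of $(\calf,\rho_\mu)$ provided by total boundedness. For each $f\in\calf$, pick a nearest net point $\pi_\delta(f)$; then $|\nu_n(f)-\nu_n(\pi_\delta(f))|\le \omega_\delta(\nu_n)$, whose outer probability of exceeding $\epsilon$ vanishes in the double limit $\lim_{\delta\to 0^+}\limsup_{n\to\infty}$ by \eqref{eq:equi-contiuity}. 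Combined with tightness of the $N(\delta)$-dimensional projection from the marginal CLT, this yields asymptotic tightness of $\{\nu_n\}$ in $\ell^\infty(\calf)$; by the abstract principle, weak convergence to a tight limit follows, and its law is identified through the finite-dimensional marginals as that of $G_\mu$, whose sample paths are $\rho_\mu$-uniformly continuous by construction.

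The main obstacle is the careful handling of outer probabilities required to rigorously deduce asymptotic tightness from \eqref{eq:equi-contiuity}. Concretely, one must show that for every $\epsilon,\eta>0$ there exists a compact $K\subseteq \ell^\infty(\calf)$ with $\bbp^*\bigl(d(\nu_n,K)>\epsilon\bigr)<\eta$ for all large $n$, where $d$ is the uniform metric on $\ell^\infty(\calf)$. The set $K$ is built as the closure of a $\rho_\mu$-equicontinuous, uniformly bounded family through Arzela--Ascoli on the totally bounded space $(\calf,\rho_\mu)$; the equicontinuity condition is precisely what guarantees that the oscillations of $\nu_n$ over $\rho_\mu$-small balls decay uniformly in $n$, so that $\nu_n$ concentrates near such an equicontinuous family. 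All other ingredients—the multivariate CLT, the continuous mapping theorem, and the identification of the Gaussian limit—are classical and essentially routine once asymptotic tightness is in hand.
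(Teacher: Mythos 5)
This statement is not proved in the paper at all: it is quoted verbatim as a background result, attributed to \citet[Theorem 3.7.2]{dudley2014uniform}, so there is no in-paper argument to compare against. Judged on its own terms, your proposal follows the standard route (the Hoffmann--J{\o}rgensen framework as in van der Vaart and Wellner, Theorems 1.5.4 and 1.5.7): weak convergence to a tight limit in $\ell^\infty(\calf)$ decomposes into finite-dimensional convergence plus asymptotic tightness, and asymptotic tightness is characterized via total boundedness of the index set together with asymptotic uniform equicontinuity. The converse direction and the equicontinuity half of the forward direction (continuous mapping applied to the oscillation functional $\omega_\delta$, which is Lipschitz on $\ell^\infty(\calf)$, followed by Portmanteau) are correct in outline.

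Two steps deserve more than the gesture you give them. First, in the forward direction, total boundedness of $(\calf,\rho_\mu)$ is \emph{not} an elementary consequence of ``otherwise sample paths could oscillate unboundedly on a small ball'': the general asymptotic-tightness characterization only produces \emph{some} totally bounded semimetric, and upgrading this to the specific intrinsic metric $\rho_\mu$ requires a genuinely Gaussian fact --- Sudakov's minoration (a bounded separable Gaussian process forces its index set to be totally bounded in the standard-deviation metric), or equivalently \citet[Lemma 1.5.9]{van1996weak}. Second, the construction of the compact set $K$ witnessing asymptotic tightness in the converse needs the uniform boundedness of $\nu_n$ over the finite $\delta$-net (supplied by the marginal CLT) to be combined with the oscillation bound via a finite-intersection/union argument over nets at scales $\delta_k\downarrow 0$; you correctly identify this as the main obstacle but do not carry it out. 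With those two points filled in, the proof is the standard one and is sound.
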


In our setting, the empirical processes are valued in $C(\calm)$ and thus Borel measurable. Consequently, the outer expectation and outer probability in the definition of Donsker class reduce to ordinary expectation and probability.

\subsection{Entropy Conditions}

A common criterion to determine whether a function class $\calf\subseteq L^2(\mu)$ is $\mu$-Donsker is via the \emph{bracketing entropy}. First of all we introduce the metric entropy of a metric space.

\begin{definition}
    Let $(\Theta,d_\Theta)$ be a metric space. An $\epsilon$-cover of $\Theta$ is a set $\{\theta_1,\ldots,\theta_\ell\}\subseteq \Theta$ such that for any $\theta\in\Theta$ there exists some $\theta_i$ such that $d_\Theta(\theta_i,\theta)\le \epsilon$. The $\epsilon$-covering number $N(\epsilon,\Theta,d_\Theta)$ is the minimum number of $\epsilon$-covers. The metric entropy of $\Theta$ is the logarithm of its covering number $\log N(\epsilon,\Theta,d_\Theta)$.
\end{definition}

Let $\|\cdot\|$ be any norm on the function space $\calf$. For functions there are special covering sets called \emph{brackets}.

\begin{definition}
Let $(\mathcal{F},\|\cdot\|)$ be a normed space of functions $f:\mathcal{X}\to\bbr$. Given two functions $l(\cdot)$ and $u(\cdot)$, the bracket $[l,u]$ is the set of functions $f\in\mathcal{F}$ with $l(x)\le f(x)\le u(x)$ for all $x\in\mathcal{X}$. An $\epsilon$-bracket is a bracket $[l,u]$ with $\|u-l\|\le\epsilon$. The $\epsilon$-bracketing number $N_{[\,]}(\epsilon,\mathcal{F},\|\cdot\|)$ is the minimal number of $\epsilon$-brackets covering $\mathcal{F}$. The bracketing entropy of $\calf$ is the logarithm of its bracketing number $\log N_{[\,]}(\epsilon,\calf,\|\cdot\|)$. 
\end{definition}

Since any norm $\|\cdot\|$ on $\calf$ induces a distance $d_{\|\cdot\|}$, we also have the metric entropy $\log N(\epsilon,\calf,d_{\|\cdot\|})$. A basic relation between metric entropy and bracketing entropy is given by
$$
\log N(\epsilon,\calf,d_{\|\cdot\|})\le \log N_{[\,]}(2\epsilon,\calf,\|\cdot\|).
$$
In general, the inequality cannot be reversed. However, if the function class is parametrized by a metric space $(\Theta,d_\Theta)$, under certain assumptions we can control the bracketing entropy of $\calf$ by the metric entropy of the parameter space $\Theta$.

\begin{lemma}\label{lemma:lip-entropy}
  Consider a class of functions $\calf=\{f_\theta:\theta\in\Theta\}$ of functions on $\calx$. Suppose the dependence on $\theta$ is Lipschitz in the sense that 
  $$
  \|f_{\theta_1}-f_{\theta_2}\|\le L d_\Theta(\theta_1,\theta_2),
  $$
  for some constant $L>0$. Then
  $$
  \log N_{[\,]}(2\epsilon L,\calf,\|\cdot\|)\le \log N(\epsilon,\Theta,d_\Theta)
  $$
\end{lemma}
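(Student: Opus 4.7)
The plan is to convert an $\epsilon$-cover of the parameter space into a family of brackets for $\calf$, using the Lipschitz bound to translate proximity in $(\Theta,d_\Theta)$ into proximity in $(\calf,\|\cdot\|)$. Let $N=N(\epsilon,\Theta,d_\Theta)$ and fix an $\epsilon$-cover $\{\theta_1,\ldots,\theta_N\}\subseteq \Theta$.

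The main construction is to set, for each $i=1,\ldots,N$,
\begin{equation*}
l_i(x) = f_{\theta_i}(x)-L\epsilon,\qquad u_i(x) = f_{\theta_i}(x)+L\epsilon,\qquad x\in\calx,
\end{equation*}
where the $\pm L\epsilon$ are added as constant functions on $\calx$. I would then verify two things. First, the brackets cover $\calf$: given any $\theta\in\Theta$, pick $\theta_i$ with $d_\Theta(\theta,\theta_i)\le \epsilon$; the Lipschitz hypothesis gives $\|f_\theta-f_{\theta_i}\|\le L\epsilon$, which, in the setting where $\|\cdot\|$ is the uniform norm on $C(\calx)$ used throughout the paper's applications to $\calv^t$, yields the pointwise estimate $|f_\theta(x)-f_{\theta_i}(x)|\le L\epsilon$ for every $x\in\calx$, hence $l_i(x)\le f_\theta(x)\le u_i(x)$. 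Second, the bracket width is controlled: $u_i-l_i\equiv 2L\epsilon$ on $\calx$, so $\|u_i-l_i\|=2L\epsilon$ provided $\|\cdot\|$ assigns norm one to the constant function $\mathbf{1}_\calx$, as the sup norm does.

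Combining these two observations, $\{[l_i,u_i]\}_{i=1}^N$ is a collection of $2L\epsilon$-brackets that covers $\calf$, so $N_{[\,]}(2L\epsilon,\calf,\|\cdot\|)\le N$; taking logarithms yields the desired inequality.

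The only genuine subtlety is the implicit step that passes from a Lipschitz bound measured in the norm $\|\cdot\|$ to a \emph{pointwise} bound needed to realize the brackets; this passage is automatic for the uniform norm (the one invoked in the paper's applications), and for a general norm one would either impose a pointwise envelope-type Lipschitz condition or interpret the brackets in the corresponding $L^p$-sense. I would therefore present the proof under the standing interpretation of $\|\cdot\|$ used elsewhere in Appendix B, where the passage is immediate and no further assumption is required.
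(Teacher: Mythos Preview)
Your proposal is correct and follows exactly the same approach as the paper: take an $\epsilon$-cover $\{\theta_i\}$ of $\Theta$, form the brackets $[f_{\theta_i}-L\epsilon,\,f_{\theta_i}+L\epsilon]$, and use the Lipschitz bound to show these $2L\epsilon$-brackets cover $\calf$. Your remark about the passage from the norm bound to a pointwise bound is a valid clarification that the paper's proof leaves implicit, but it does not alter the argument.
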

\begin{proof}
    Let $\theta_1,\ldots,\theta_\ell$ be an $\epsilon$-cover of $\Theta$ under the metric $d_\Theta$. Then consider the brackets 
    $$
    [f_{\theta_i}-\epsilon L,f_{\theta_i}+\epsilon L].
    $$
    For any $f_\theta\in\calf$, there is some $\theta_i$ such that $d_\Theta(\theta_i,\theta)\le\epsilon$, and
    $$
    \|f_\theta-f_{\theta_i}\|\le L d_\Theta(\theta_i,\theta)\le \epsilon L.
    $$
    Thus the brackets cover $\calf$ and are of size $2\epsilon L$.
\end{proof}

A function class is Donsker if its bracketing entropy does not grow too fast. The growth rate is quantified by the so-called \emph{bracketing integral}, which is defined as
$$
J_{[\,]}(s,\calf,L_2(\mu))=\int_0^s\sqrt{\log N_{[\,]}(\epsilon,\calf,L_2(\mu))}\diff{\epsilon}.
$$

\begin{theorem}{\citep[Theorem 7.2.1]{dudley2014uniform}}\label{thm:donsker}
    Let $(\calx,\Sigma_\calx,\mu)$ be a probability space and $\calf\subseteq L^2(\mu)$. If $J_{[\,]}(1,\calf,L_2(\mu))<\infty$, then $\calf$ is $\mu$-Donsker.
\end{theorem}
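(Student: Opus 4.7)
The plan is to verify the two conditions in \Cref{thm:equi-continuity}: total boundedness of $\calf$ under $\rho_\mu$, and the asymptotic equicontinuity of the empirical process $\nu_n$ over $\calf$. For the first, observe that the assumption $J_{[\,]}(1,\calf,L_2(\mu))<\infty$ forces $N_{[\,]}(\epsilon,\calf,L_2(\mu))<\infty$ for every $\epsilon>0$. Since the centers (say, lower endpoints) of the brackets form an $\epsilon$-net of $\calf$ in $L^2(\mu)$, and $\rho_\mu(f,g)\le\|f-g\|_{L^2(\mu)}$, the class is totally bounded for $\rho_\mu$. This step is routine.

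The core work is the asymptotic equicontinuity condition \eqref{eq:equi-contiuity}. The approach is a generic chaining argument. Fix $\eta>0$, and for each integer $k\ge 0$ choose a minimal bracket cover of $\calf$ consisting of $N_k:=N_{[\,]}(\eta\,2^{-k},\calf,L_2(\mu))$ brackets of $L^2(\mu)$-size $\eta_k:=\eta\,2^{-k}$. For every $f\in\calf$, let $\pi_k(f)$ denote the lower endpoint of the level-$k$ bracket containing $f$, so that $\|\pi_{k+1}(f)-\pi_k(f)\|_{L^2(\mu)}\le 2\eta_k$ and the number of distinct functions $\pi_{k+1}(f)-\pi_k(f)$ is at most $N_kN_{k+1}\le N_{k+1}^2$. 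Telescoping,
\begin{equation*}
\nu_n(f)-\nu_n(\pi_0(f))=\sum_{k=0}^{K-1}\nu_n\bigl(\pi_{k+1}(f)-\pi_k(f)\bigr)+\nu_n\bigl(f-\pi_K(f)\bigr),
\end{equation*}
which reduces control of $\sup_{\rho_\mu(f,g)<\delta}|\nu_n(f-g)|$ to (i) a maximum over finitely many link differences at each resolution, and (ii) a small residual term.

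The link differences are handled by a Bernstein-type maximal inequality. For each fixed $k$, the $N_{k+1}^2$ summands are bounded in $L^2(\mu)$ by $2\eta_k$, and after a standard truncation at level $M_k\asymp \sqn\eta_k/\sqrt{\log N_{k+1}}$ (needed because $\calf$ need not be uniformly bounded), Bernstein's inequality bounds the expectation of the maximum by a constant multiple of $\eta_k\sqrt{\log N_{k+1}}$, plus negligible truncation error. Summing over $k=0,\ldots,K-1$ produces a bound comparable to
\begin{equation*}
\sum_{k=0}^{K-1}\eta_k\sqrt{\log N_{k+1}}\;\lesssim\;\int_0^{\eta}\!\sqrt{\log N_{[\,]}(\epsilon,\calf,L_2(\mu))}\,\diff{\epsilon},
\end{equation*}
which tends to $0$ as $\eta\to 0^+$ precisely because $J_{[\,]}(1,\calf,L_2(\mu))<\infty$. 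The residual term $\nu_n(f-\pi_K(f))$ is handled by choosing $K=K(\eta)$ so that $\eta_K\downarrow 0$ faster than $1/\sqn$ and invoking a further bracketing bound on the envelope of the small-scale remainder. Combining with the fact that within a given level-$0$ bracket the starting point $\pi_0(f)$ can be pinned down if $\rho_\mu(f,g)<\delta\ll\eta$, we obtain \eqref{eq:equi-contiuity}.

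The main obstacle is the probabilistic step: obtaining the maximal inequality with the right dependence on $N_{k+1}$ and $\eta_k$ while accommodating unbounded functions via truncation, and then carefully balancing the truncation level so that the discarded tails remain summable. Once this is in place the chaining and the identification with the bracketing integral are bookkeeping. By \Cref{thm:equi-continuity} the two established properties yield that $\calf$ is $\mu$-Donsker.
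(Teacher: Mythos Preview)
The paper does not prove this statement: it is quoted as \citep[Theorem 7.2.1]{dudley2014uniform} and used as a black box, with no accompanying proof environment. There is therefore nothing in the paper to compare your argument against.

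For what it is worth, your sketch is the standard route (Ossiander's bracketing CLT): total boundedness from finite bracketing numbers, then asymptotic equicontinuity via a chaining decomposition along nested bracket covers, with Bernstein-type maximal inequalities at each resolution and truncation to handle unbounded functions, summing to the bracketing integral. The outline is correct in spirit; the parts you flag as ``the main obstacle'' (the precise truncation balance and the residual term) are exactly where the real work lies and would need to be written out carefully to constitute a proof. But again, the paper itself simply cites the result.
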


\subsection{Functional Delta Method}

Suppose $\{\xi_n\}$ is a sequence of random variables of real numbers such that $\sqn(\xi_n-\theta)\wto \xi$ for some $\theta\in\bbr$ as $n\to \infty$. Let  $\phi:\bbr\to\bbr$ be a differentiable function which has derivative $\phi'(\theta)$ at $\theta$. The standard delta method implies that $\sqn(\phi(\xi_n)-\phi(\theta))\wto \phi'(\theta)\xi$ as $n\to\infty$.

To derive the CLT for $t$-Varadhan variances, we need derivatives for maps between Banach spaces and the functional delta method.

\begin{definition}
Let $\cald$ and $\cale$ be Banach spaces with norms $\|\cdot\|_\cald$ and $\|\cdot\|_\cale$. Given any $\theta\in\mathcal{D}$, a map $\phi:\cald\to\cale$ is Hadamard differentiable at $\theta$ if there exists a continuous linear map $\phi'_\theta:\mathcal{D}\to\mathcal{W}$ such that for any $\eta\in\mathcal{D}$,
$$
\frac{\phi(\theta+a_k\eta_k)-\phi(\theta)}{a_k}\to\phi'_\theta(\eta), \,\text{ as }k\to\infty,
$$
for all converging sequences $a_k\to 0$ and $\eta_k\to \eta$.
\end{definition}

The following version of the functional delta method can be found in \cite{kosorok2008introduction}.

\begin{theorem}\label{thm:functional-delta}
Let $\cald$ and $\cale$ be Banach spaces and $\phi:\cald\to\cale$ be Hadamard-differentiable at $\theta\in\cald$. Let $\{\xi_n\}$ be a sequence of $\cald$-valued random variables. Assume that $r_n(\xi_n-\theta)\wto \xi$ for some sequence $r_n\to\infty$ and  tight $\cald$-valued random variable $\xi$. Then $r_n(\phi(\xi_n)-\phi(\theta))\wto\phi'_\theta(\xi)$.
\end{theorem}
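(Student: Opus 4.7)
The plan is to deduce the functional delta method from an almost-sure representation for weakly convergent sequences combined directly with the Hadamard differentiability definition. Writing $Y_n = r_n(\xi_n - \theta)$, the hypothesis reads $Y_n \wto \xi$, and the desired conclusion reads $g_n(Y_n) \wto \phi'_\theta(\xi)$ where
$$
g_n(y) = r_n\bigl[\phi(\theta + r_n^{-1} y) - \phi(\theta)\bigr], \qquad y\in\cald.
$$
Since $\phi'_\theta$ is a continuous linear map, the natural strategy is to lift the weak convergence of $Y_n$ to almost sure convergence on an auxiliary probability space, and then let the Hadamard differentiability definition do the work along that sequence.

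First I would invoke a Skorokhod-type almost sure representation: given the tight limit $\xi$, one constructs a probability space carrying random elements $\widetilde Y_n$ and $\widetilde\xi$ with the same laws as $Y_n$ and $\xi$, respectively, such that $\widetilde Y_n \to \widetilde\xi$ almost surely in $\cald$. Tightness of $\xi$ ensures that its law concentrates on a separable subset of $\cald$, which is precisely the ingredient that makes such a representation available even when $\cald$ itself is not separable.

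Second I would apply Hadamard differentiability along the representation. Taking $a_n = r_n^{-1}$, so that $a_n \to 0$, and $\eta_n = \widetilde Y_n \to \widetilde\xi$ almost surely, the definition of the Hadamard derivative gives
$$
g_n(\widetilde Y_n) \;=\; \frac{\phi(\theta + a_n \widetilde Y_n) - \phi(\theta)}{a_n} \;\longrightarrow\; \phi'_\theta(\widetilde\xi) \qquad \text{almost surely,}
$$
since $\phi'_\theta$ is continuous linear and the convergence along the sequence $(a_n,\eta_n)$ is guaranteed by the Hadamard definition. Hence $g_n(\widetilde Y_n) \wto \phi'_\theta(\widetilde\xi)$, and passing back to the original laws through $g_n(\widetilde Y_n) \stackrel{d}{=} g_n(Y_n)$ and $\phi'_\theta(\widetilde\xi) \stackrel{d}{=} \phi'_\theta(\xi)$ yields $r_n[\phi(\xi_n) - \phi(\theta)] \wto \phi'_\theta(\xi)$, which is the claim.

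The main obstacle is the measurability bookkeeping in the general Banach space setting. If $\cald$ is non-separable, the random elements $Y_n$ may fail to be Borel measurable and the Skorokhod representation must be formulated in terms of outer probabilities following Hoffmann--J\o rgensen, which is why textbook treatments of this theorem are more technical than the above sketch suggests. For the applications in this paper, however, the relevant random elements take values in the separable space $C(\calm)$, so Borel measurability is automatic and the argument collapses to the clean chain described above without further auxiliary technicalities.
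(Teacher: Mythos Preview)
The paper does not supply its own proof of this statement; it is quoted as background from \cite{kosorok2008introduction}. Your Skorokhod-representation argument is the standard textbook proof (essentially the one in van der Vaart--Wellner and Kosorok), and it is correct as sketched, including your remark that the measurability subtleties dissolve in the separable setting $C(\calm)$ relevant here.
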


Let $\calx$ be a compact metric space. The following theorem shows that the min functional is Hadamard-differentiable at every $f\in\ell^\infty(\calx)\backslash\{0\}$ \citep[Theorem 2.1]{carcamo2020directional}. 

\begin{theorem}\label{thm:min-functional}
Let $(\calx,d_\calx)$ be a compact metric space and $C(\calx)$ be the Banach space of continuous functions over $\calx$ equipped with the uniform norm $\|\cdot\|_\infty$. Consider the min functional which takes $f\in C(\calx)$ to its minimum $\min(f)$.
\begin{enumerate}
    \item The min functional is Lipschitz continuous. For any $f,g\in C(\calx)$,
    $$
    |\min(f)-\min(g)|\le \|f-g\|_\infty.
    $$
    \item The min functional is Hadamard-differentiable at every $f\neq 0$. For any $g\in C(\calx)$, the Hadamard derivative is given as 
    $$
    {\min}'_f(g) = \min_{x\in\mathop{\arg\min}(f)} g(x).
    $$
    In particular, if $x_f$ is the unique minimizer of $f$, then ${\min}'_f(g) = g(x_f)$.
\end{enumerate}
\end{theorem}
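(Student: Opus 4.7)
Part (i) follows from a direct pointwise comparison. For every $x\in\calx$ the inequality $f(x)\le g(x)+\|f-g\|_\infty$ holds; taking minima on both sides yields $\min(f)\le\min(g)+\|f-g\|_\infty$, and swapping the roles of $f$ and $g$ gives the Lipschitz estimate.

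For part (ii), set $A=\mathop{\arg\min}(f)\subseteq\calx$, which is nonempty and compact by compactness of $\calx$ and continuity of $f$. Given $g\in C(\calx)$ and sequences $a_k\to 0^+$ and $g_k\to g$ uniformly in $C(\calx)$, I would establish
\begin{equation*}
\frac{\min(f+a_kg_k)-\min(f)}{a_k}\longrightarrow \min_{x\in A}g(x)
\end{equation*}
by matching $\limsup$ and $\liminf$ bounds. For the upper bound, inserting any fixed $x^*\in A$ as a test point gives $\min(f+a_kg_k)\le\min(f)+a_kg_k(x^*)$; dividing by $a_k$, taking the infimum over $x^*\in A$, and using uniform convergence $g_k\to g$ on the compact set $A$ yields $\limsup\le\min_{x\in A}g(x)$.

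For the lower bound, I would select $x_k\in\mathop{\arg\min}(f+a_kg_k)$ (available by compactness and continuity) and use $f(x_k)\ge\min(f)$ to obtain that the difference quotient is bounded below by $g_k(x_k)$. Extracting a convergent subsequence $x_k\to x^*$, the crucial step is to verify $x^*\in A$: passing to the limit in $f(x_k)+a_kg_k(x_k)\le f(y)+a_kg_k(y)$ for every $y\in\calx$, using boundedness of $\{g_k\}$ in $C(\calx)$ and $a_k\to 0$, gives $f(x^*)\le f(y)$. Uniform convergence together with continuity of $g$ then delivers $g_k(x_k)\to g(x^*)\ge\min_{x\in A}g(x)$, completing the lower bound.

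The main obstacle is precisely this lower bound, specifically verifying that cluster points of the perturbed minimizers $x_k$ lie in $A$; this is where the compactness of $\calx$ and uniformity of $g_k\to g$ interact critically. Once it is in hand, the unique-minimizer case of the theorem follows immediately: if $A=\{x_f\}$ then both one-sided limits collapse to the continuous linear functional $g\mapsto g(x_f)$, recovering classical Hadamard differentiability at $f$ with derivative ${\min}'_f(g)=g(x_f)$.
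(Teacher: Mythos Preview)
The paper does not supply a proof of this theorem; it is quoted from C\'arcamo et al.\ (2020) without argument. Your proof is correct and follows the standard route to Hadamard directional differentiability of the infimum: the upper bound via test points in $A=\arg\min(f)$, and the lower bound via compactness of the sequence of perturbed minimizers together with the key step that any cluster point of $\{x_k\}$ lies in $A$. One point worth making explicit is that the formula ${\min}'_f(g)=\min_{x\in A}g(x)$ is in general only a \emph{directional} derivative (your restriction to $a_k\to 0^+$ is the correct one here); it is linear, and hence a Hadamard derivative in the sense of the paper's own definition, only when $A$ is a singleton---which is exactly the case invoked later for the Varadhan variance CLT.
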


\section{Proofs}\label{app:proof}

\subsection{Proofs of \Cref{sec:consistency}}
\begin{proof}[Proof of \Cref{prop:int-vf}]\,

\begin{enumerate}
    \item For any $x\in\calm$,
    $$
    |F^t_\mu(x)-F^0_\mu(x)|\le \int_\calm |\cost^t(x,\xi)-\cost^0(x,\xi)|\diff{\mu}(\xi)\le \sup_{y\in\calm}|\cost^t(x,y)-\cost^0(x,y)|.
    $$
    Taking supremum with respect to $x$ on both sides, we have
    $$
    \|F^t_\mu-F^0_\mu\|_\infty\le\sup_{x,y\in\calm}|\cost^t(x,y)-\cost^0(x,y)|.
    $$
    By Varadhan's theorem the RHS tends to 0 as $t\to 0^+$.
    \item By Lipschitz continuity of the min functional (\Cref{thm:min-functional}),
    $$
    |V^t_\star- V^0_\star|\le \|F^t_\mu-F^0_\mu\|_\infty\to0,\,\text{ as }t\to 0^+
    $$
    \item Assume that $\dm(x^t_\star,x^0_\star)\not\to 0$ as $t\to 0^+$. There exists $\epsilon_0>0$, a sequence $\{t_k\}$ converging to 0, and a sequence $\{x^{t_k}_\star\}\subseteq\calm$  such that  $\dm(x^{t_k}_\star,x^0_\star)>\epsilon_0$ for all $k\in\mathbb{N}$. By compactness of $\calm$ there is a convergent subsequence of $\{x^{t_k}_\star\}$. Without loss of generality we assume $x^{t_k}_\star\to x'\neq x^0_\star$, as $k\to\infty$. Notice that
    $$
    |F^0_\mu(x')-F^{t_k}_\mu(x^{t_k}_\star)|\le |F^0_\mu(x')-F^0_\mu(x^{t_k}_\star)|+|F^0_\mu(x^{t_k}_\star)-F^{t_k}_\mu(x^{t_k}_\star)|,
    $$
    where the RHS tends to 0 as $k\to\infty$ by the continuity of $F^0_\mu$ and the uniform convergence of $F^t_\mu$ to $F^0_\mu$. We then have $F^0_\mu(x')=V^0_\star=F^0_\mu(x^0_\star)$, contradicting to the uniqueness  of $x^0_\star$. Therefore the original assumption is false and $\dm(x^t_\star,x^0_\star)\to 0$ as $t\to 0^+$. 
\end{enumerate}
\end{proof}

\begin{proof}[Proof of \Cref{thm:ulln-vf-function}]
The main idea is to find, for any $\epsilon>0$, a finite sequence $t_1,\ldots,t_\ell\in[0,T]$, and $x_1,\ldots,x_\ell\in\calm$, and their neighborhoods covering the whole space, such that over each neighborhood, each term in the following is bounded by $\epsilon/3$ when $n$ is large enough.
$$
|F^t_n(x)-F^t_\mu(x)|\le \underbrace{|F^t_n(x)-F^{t_i}_n(x_i)|}_{(\mathrm{I})}+\underbrace{|F^{t_i}_n(x_i)-F^{t_i}_\mu(x_i)|}_{(\mathrm{II})}+\underbrace{|F^{t_i}_\mu(x_i)-F^t_\mu(x)|}_{(\mathrm{III})}.
$$

For any $t'\in[0,T]$ and $x'\in\calm$, consider a sequence ${\delta_k}$ of positive numbers tending to 0, and metric balls $B_k(x')=\{x\in\calm:\dm(x,x')<\delta_k\}$. Define
$$
\Delta_k(\xi)=\sup_{|t-t'|<\delta_k}\sup_{x\in B_k(x')}|\cost^t(x,\xi)-\cost^{t'}(x',\xi)|.
$$
Since $\Delta_k(\xi)$ is bounded and $\cost^t(x,\xi)$ is continuous, we have
\begin{equation}\label{eq:lim0}
\lim_{k\to\infty}\bbe[\Delta_k(\xi)]=\bbe[\lim_{k\to\infty}\Delta_k(\xi)]=0.
\end{equation}
Notice that
$$
|F^t_n(x)-F^{t'}_n(x')|\le \frac{1}{n}\sum_{i=1}^n|\cost^t(x,\xi_i)-\cost^{t'}(x',\xi_i)|,
$$
taking supremum on both sides, we obtain that
$$
\sup_{|t-t'|<\delta_k}\sup_{x\in B_k(x')}|F^t_n(x)-F^{t'}_n(x')|\le\frac{1}{n}\sum_{i=1}^n\Delta_k(\xi_i).
$$
The classical LLN implies that the RHS tends to $\bbe[\Delta_k(\xi)]$ almost surely as $n\to\infty$. Together with \eqref{eq:lim0}, for any $\epsilon>0$, there exists $N\in\bbn$ depending on $t'$ and $x'$, such that for any $n>N$,
\begin{equation}\label{eq:sup1}
\sup_{|t-t'|<\delta_N}\sup_{x\in B_N(x')}|F^t_n(x)-F^{t'}_n(x')|<\frac{\epsilon}{3}
\end{equation}
holds almost surely. Moreover, since $F^t_\mu(x)$ is continuous with respect to $t$ and $x$, we can choose $\delta_N$ in a way such that
\begin{equation}\label{eq:continuous}
\sup_{|t-t'|<\delta_N}\sup_{x\in B_N(x')}|F^{t}_\mu(x)-F^{t'}_\mu(x')|<\frac{\epsilon}{3}
\end{equation}

For $(\mathrm{I})$ and $(\mathrm{III})$, consider the open cover given by
$$
\mathcal{U}_\epsilon=\{(t'-\delta_N,t'+\delta_N)\times B_N(x'):(t',x')\in[0,T]\times\calm\},
$$
where on each element \cref{eq:sup1,eq:continuous} hold. By compactness of $[0,T]\times\calm$, there exist $t_1,\ldots,t_\ell\in[0,T]$, and $x_1,\ldots,x_\ell\in\calm$, and their neighborhoods with radii $\delta_{k_1},\ldots,\delta_{k_\ell}$, and a common $N\in\bbn$ such that for any $n>N$,
\begin{equation*}
    \sup_{|t-t_i|<\delta_{k_i}}\sup_{x\in B_{k_i}(x_i)}|F^t_n(x)-F^{t_i}_n(x_i)|<\frac{\epsilon}{3}
\end{equation*}
holds almost surely for all $i=1,\ldots,\ell$, and
$$
\sup_{|t-t_i|<\delta_{k_i}}\sup_{x\in B_{k_i}(x_i)}|F^{t}_\mu(x)-F^{t_i}_\mu(x_i)|<\frac{\epsilon}{3}
$$
holds for all $i=1,\ldots,\ell$. 

For $(\mathrm{II})$, notice that given the $t_i$'s and $x_i$'s, the quantity reduces to $\ell$ cases of classical LLN.  Increasing $N$ if necessary, we have for any $n>N$, 
$$
|F^{t_i}_n(x_i)-F^{t_i}_\mu(x_i)|<\frac{\epsilon}{3}
$$
almost surely for all $i=1,\ldots,\ell$.

In summary, we have showed that for any $\epsilon>0$, there exists $N\in\bbn$ such that for any $n>N$,
$$
\sup_{t\in[0,T]}\sup_{x\in\calm}|F^t_n(x)-F^t_\mu(x)|<\epsilon
$$
holds almost surely, and the proof is complete.
\end{proof}

\begin{proof}[Proof of \Cref{thm:ulln-vf-mean-var}]
    For the first claim, since $|V^t_n-V^t_\star|\le \|F^t_n-F^t_\mu\|_\infty$, by \Cref{thm:ulln-vf-function}, $V^t_n\asto V^t_\star$ as $n\to\infty$. For the second claim, assume that $\displaystyle\sup_{t\in[0,T]}\dm(x^t_n,x^t_\star)\centernot\asto 0$ as $n\to\infty$. With probability greater than 0, there exists $\epsilon_0>0$, a sequence $\{n_k\}$ tending to infinity, and sequences $\{x^{t_k}_{n_k}\}$ and $\{x^{t_k}_\star\}$ such that $\dm(x^{t_k}_{n_k},x^{t_k}_\star)\ge\epsilon$ for all $k$. Since $[0,T]\times\calm$ is compact, by passing to subsequences if necessary, we can assume $t_k\to t'$ and $x^{t_k}_{n_k}\to x'$ as $k\to\infty$. By \Cref{prop:int-vf}, $x^{t_k}_\star\to x^{t'}_\star$ as $k\to\infty$, hence $x'\neq x^{t'}_\star$, and $F^{t'}_\mu(x')>V^{t'}_\star$. Notice that 
    $$
    F^{t_k}_{n_k}(x^{t_k}_{n_k})-F^{t'}_\mu(x') = \big(F^{t_k}_{{n_k}}(x^{t_k}_{n_k})-F^{t_k}_\mu(x^{t_k}_{n_k})\big)+\big(F^{t_k}_\mu(x^{t_k}_{n_k})-F^{t'}_\mu(x')\big),
    $$
    where the first term tends to 0 almost surely by \Cref{thm:ulln-vf-function} and the second term tends to 0 by continuity of $F^t_\mu(x)$. Together it implies that $V^{t_k}_{n_k}\to F^{t'}_\mu(x')$ with probability greater than 0. However, since
    $$
    |V^{t_k}_{n_k}-V^{t'}_\star|\le |V^{t_k}_{n_k}-V^{t_k}_\star|+|V^{t_k}_\star-V^{t'}_\star|,
    $$
    we also have $V^{t_k}_{n_k}\asto V^{t'}_\star$ as $k\to\infty$, yielding a contradiction. Hence the assumption is false and we have $\displaystyle\sup_{t\in[0,T]}\dm(x^t_n,x^t_\star)\asto 0$ as $n\to\infty$. 
    
\end{proof}

\subsection{Proofs of \Cref{sec:clt-func-var}}
\begin{proof}[Proof of \Cref{thm:clt-vf-function}]
Consider the following function class:
\begin{equation*}
\calv^t=\{\cost^t_x(\cdot)=\cost^t(x,\cdot):x\in\calm\}
\end{equation*}
    Notice that 
    $$
    \begin{aligned}
        \mu_n(\cost^t_x)=\frac{1}{n}\sum_{i=1}^n\cost^t(x,\xi_i) = F^t_n(x),\quad
        \mu(\cost^t_x) = \int_\calm \cost^t(x,\xi)\dmu(\xi)=F^t_\mu(x).
    \end{aligned}
    $$
    It suffices to prove that the empirical process $\nu_n=\sqn(\mu_n-\mu)$ indexed by $\calv^t$ converges weakly to a Gaussian process. 

    For any $x,y\in\calm$, since
    $$
    \|\cost^t_x-\cost^t_y\|_{L^2(\mu)} = \bigg(\int_\calm |\cost^t_x(\xi)-\cost^t_y(\xi)|^2\dmu(\xi)\bigg)^{\frac{1}{2}}\le\|\cost^t_x-\cost^t_y\|_\infty,
    $$
    thus any $\epsilon$-bracket for  $\|\cdot\|_\infty$ is also an $\epsilon$-bracket for $\|\cdot\|_{L^2(\mu)}$, and we have
    $$
    N_{[\,]}\big(\epsilon,\calv^t,\|\cdot\|_{L^2(\mu)}\big)\le N_{[\,]}\big(\epsilon,\calv^t,\|\cdot\|_\infty\big).
    $$
    For any $t\ge 0$, we have
    $$
    \|\cost^t_x-\cost^t_y\|_\infty\le L_t\dm(x,y),
    $$
    thus the function class $\calv^t$ is Lipschitz to its parameter space $\calm$. By \Cref{lemma:lip-entropy}, for any $\epsilon>0$,
    $$
    \log N_{[\,]}(2\epsilon L_t,\calv^t,\|\cdot\|_\infty)\le \log N(\epsilon,\calm,\dm).
    $$
    For any compact Riemannian manifold of dimension $m$, the covering number is known to be of order $N(\epsilon,\calm,\dm)\asymp \epsilon^{-m}$ \citep[Lemma B.7]{aamari2019non}. Thus there exists a constant $C>0$ such that
    $$
    \log N(\epsilon,\calm,\dm)\le mC\log(\epsilon^{-1}).
    $$
    We have the following estimation for  bracketing integral,
    $$
    \begin{aligned}
        J_{[\,]}(1,\calv^t,L_2(\mu))=&\int_0^1\sqrt{\log N_{[\,]}(\epsilon,\calv^t,L^2(\mu))}\diff{\epsilon}\\
        \le&\int_0^1\sqrt{\log N_{[\,]}(\epsilon,\calv^t,\|\cdot\|_\infty}\diff{\epsilon}\\
        \le&\int_0^1\sqrt{\log N\bigg(\frac{\epsilon}{2L_t},\calm,\dm\bigg)}\diff{\epsilon}\\
        \le&\int_0^1\sqrt{mC\log\bigg(\frac{2L_t}{\epsilon}\bigg)}\diff{\epsilon}<\infty.
    \end{aligned}
    $$
    By \Cref{thm:donsker}, the  function class $\calv^t$ is $\mu$-Donsker. Therefore the empirical process $\nu_n$ converges weakly to a Gaussian process $G^t_\mu$ with zero mean and covariance structure
    $$
    \begin{aligned}
        \bbe[G^t_\mu(x)G^t_\mu(y)] =& \int_\calm \cost^t_x(\xi)\cost^t_y(\xi)\dmu(\xi)-\bigg(\int_\calm\cost^t_x(\xi)\dmu(\xi)\bigg)\bigg(\int_\calm\cost^t_y(\xi)\dmu(\xi)\bigg)\\
        =&\int_\calm \cost^t_x(\xi)\cost^t_y(\xi)\dmu(\xi)-F^t_\mu(x)F^t_\mu(y),
    \end{aligned}
    $$
    which proves the claim.
\end{proof}

\begin{proof}[Proof of \Cref{thm:clt-vf-var}]
By \Cref{thm:clt-vf-function}, $\sqn(F^t_n-F^t_\mu)\wto G^t_\mu$ as $n\to\infty$. Applying the functional delta method (\Cref{thm:functional-delta}) to the min functional, we have
$$
\sqn(V^t_n-V^t_\mu)=\sqn(\min(F^t_n)-\min(F^t_\mu))\wto {\min}'_{F^t_\mu}(G^t_\mu),\,\text{ as }n\to\infty.
$$
By \Cref{thm:min-functional}, the Hadamard derivative at $F^t_\mu$ is given by
$$
{\min}'_{F^t_\mu}(G^t_\mu)=\min_{x\in\mathop{\arg\min}(F^t_\mu)} G^t_\mu(x)=G^t_\mu(x^t_\star),
$$
which is a Gaussian variable with zero mean and variance
$$
\sigma^t = \bbe[(\cost^t_{x^t_\star}-\bbe[\cost^t_{x^t_\star}])^2]=\bbe[(\cost^t_{x^t_\star}-V^t_\star)^2]=\int_\calm (\cost^t(x^t_\star,\xi))^2\dmu(\xi)-\big(V^t_\star\big)^2.
$$
\end{proof}

\begin{proof}[Proof of \Cref{thm:clt-vf-mean}]

Consider the tangent vector field give by
$$
\gcost^t(x,y) = \frac{\partial}{\partial x}\cost^t(x,y),
$$
where $\partial/\partial x$ denotes the gradient of $\cost^t(x,y)$ with respect to variable $x$. Let $\Pi_{x\to x^t_\star}:T_x\calm\to T_{x^t_\star}\calm$ be the parallel transport operator. For any $x\in\calm$, define 
$$
\mathbf{g}_x(y)=\Pi_{x\to x^t_{\star}}(\gcost^t(x,y)),
$$
where we omited $t$ in the superscript for simplicity. By fixing a basis at $T_{x^t_\star}\calm$, we can view $\mathbf{g}_x$ as a map $\calm\to\bbr^m$. Let $\calg=\{\mathbf{g}_x:x\in\calm\}$ be the $\bbr^m$-valued function class. The empirical process indexed by $\calg$ is
$$
\begin{aligned}
    \nu_n(\mathbf{g}_x)=&\sqn(\mu_n-\mu)(\mathbf{g}_x)\\
    =&\sqn\bigg(\frac{1}{n}\sum_{i=1}^n\mathbf{g}_x(\xi_i)-\int_\calm \mathbf{g}_x(\xi)\dmu(\xi)\bigg).
\end{aligned}
$$
By \Cref{lemma:taylor-expan} we have the first order Taylor expansion for $\mathbf{g}_{x^t_n}$ at $x^t_\star$,
 $$
\mathbf{g}_{x^t_n}(y) = \mathbf{g}_{x^t_\star}(y) + \hess_{x^t_\star}(\bms^t_y)(\rlog{x^t_\star}{x^t_n}) + \rmr(x^t_n,x^t_\star,y),
$$
where for each $y\in\calm$, the remainder $\rmr(x^t_n,x^t_\star,y)=O(\dm^2(x^t_n,x^t_\star))$. Integrating over $y\in\calm$, we have
$$
\mu(\mathbf{g}_{x^t_n})=\mu(\mathbf{g}_{x^t_\star})+\hess_{x^t_\star}(F^t_\mu)(\rlog{x^t_\star}{x^t_n})+\rmr(x^t_n,x^t_\star).
$$
Since $x^t_\star$ is the unique minimizer of $F^t_\mu$, we have $\grad_{x^t_\star}(F^t_\mu)=0$, and thus
$$
\begin{aligned}
\mu(\mathbf{g}_{x^t_\star})&=\int_\calm\mathbf{g}_{x^t_\star}(\xi)\dmu(\xi)\\
&=\int_\calm\gcost^t(x^t_\star,\xi)\dmu(\xi)\\
&=\int_\calm\frac{\partial}{\partial x}\bigg|_{x=x^t_\star}\cost^t(x,\xi)\dmu(\xi)\\
&=\grad_{x^t_\star}(F^t_\mu)=0.   
\end{aligned}
$$
Similarly, at $x^t_n$ we have $\mu_n(\mathbf{g}_{x^t_n})=n^{-1}\sum_{i=1}^n\mathbf{g}_{x^t_n}(\xi_i)=0$. Therefore,
$$
\begin{aligned}
    \nu_n(\mathbf{g}_{x^t_n})=&\sqn(\mu_n-\mu)(\mathbf{g}_{x^t_n})\\
    =&-\sqn\mu(\mathbf{g}_{x^t_n})\\
    =&-\sqn\bigg(\hess_{x^t_\star}(F^t_\mu)(\rlog{x^t_\star}{x^t_n})+\rmr(x^t_n,x^t_\star)\bigg),
\end{aligned}
$$
which implies that
$$
\begin{aligned}
-\sqn\,\rlog{x^t_\star}{x^t_n}=&\big(\hess^{-1}_{x^t_\star}(F^t_\mu)\big)\big(\nu_n(\mathbf{g}_{x^t_n})+\sqn\,\rmr(x^t_n,x^t_\star)\big)\\
=&\big(\hess^{-1}_{x^t_\star}(F^t_\mu)\big)\big(\nu_n(\mathbf{g}_{x^t_n})+\nu_n(\mathbf{g}_{x^t_\star})-\nu_n(\mathbf{g}_{x^t_\star})+\sqn\,\rmr(x^t_n,x^t_\star)\big)\\
=&\underbrace{\big(\hess^{-1}_{x^t_\star}(F^t_\mu)\big)\big(\nu_n(\mathbf{g}_{x^t_\star})\big)}_{(\mathrm{I})}+\underbrace{\big(\hess^{-1}_{x^t_\star}(F^t_\mu)\big)\big(\nu_n(\mathbf{g}_{x^t_n})-\nu_n(\mathbf{g}_{x^t_\star})\big)}_{(\mathrm{II})}\\
+&\underbrace{\sqn\big(\hess^{-1}_{x^t_\star}(F^t_\mu)\big)\big(\rmr(x^t_n,x^t_\star)\big)}_{(\mathrm{III})}
\end{aligned}
$$
We analyze (I)--(III) separately in the following.
\begin{itemize}
    \item[(I)] Since $\mathbf{g}_{x^t_\star}(\xi_1),\ldots,\mathbf{g}_{x^t_\star}(\xi_n)$ are i.i.d. random vectors in $\bbr^m$, by the multidimensional CLT, we have
    $$
    \nu_n(\mathbf{g}_{x^t_\star})=\frac{1}{\sqn}\sum_{i=1}^n\mathbf{g}_{x^t_\star}(\xi_i)\wto \caln(0,\widetilde{\Sigma}), \,\text{ as }n\to\infty,
    $$
    where the covariance matrix $\widetilde{\Sigma}$ is given by
    $$
    \widetilde{\Sigma}=\bbe[\mathbf{g}_{x^t_\star}\mathbf{g}^\top_{x^t_\star}]=\bbe\big[(\grad_{x^t_\star}(\bms^t_\xi))(\grad_{x^t_\star}(\bms^t_\xi))^\top\big].
    $$
    Consequently, we have 
    $$
    \big(\hess^{-1}_{x^t_\star}(F^t_\mu)\big)(\nu_n(\mathbf{g}_{x^t_\star}))\wto \caln(0,\Sigma^t), \,\text{ as }n\to\infty.
    $$
    where $\Sigma^t = \big(\hess^{-1}_{x^t_\star}(F^t_\mu)\big)\widetilde{\Sigma}\big((\hess^{-1}_{x^t_\star}(F^t_\mu)\big)^\top$.
    \item[(II)] We would like to show $\|\nu_n(\mathbf{g}_{x^t_n})-\nu_n(\mathbf{g}_{x^t_\star})\|_{T_{x^t_\star}\calm}\pto 0$ as $n\to\infty$. For any $k=1,\ldots,m$, let $(\mathbf{g}_x)_k$ be the $k$-th component of $\mathbf{g}_x$. It suffices to show that
    \begin{equation}\label{eq:nu-g-k-continuous}
    |\nu_n((\mathbf{g}_{x^t_n})_k)-\nu_n((\mathbf{g}_{x^t_\star})_k)|\pto 0,\,\text{ as }n\to\infty.
    \end{equation}
    Consider the following function class
$$
\calg_k=\{(\mathbf{g}_x)_k:x\in\calm\}.
$$
Since $\cost^t$ is smooth for any $t>0$, the function $(\mathbf{g}_x)_k$ is also smooth, and thus
$$
\begin{aligned}
    \|(\mathbf{g}_x)_k-(\mathbf{g}_y)_k\|_\infty&\le \bigg(\sup_{z,w\in\calm}\big\|\grad_z\big((\mathbf{g}_w)_k\big)\big\|_{T_z\calm}\bigg)\dm(x,y)\le L_k\dm(x,y).
\end{aligned}
$$
Using the same argument as in the proof of \Cref{thm:clt-vf-function}, we can show that each $\calg_k$ is a $\mu$-Donsker class. By \Cref{thm:equi-continuity}, the function class $\calg_k$ is asymptotically equicontinuous, meaning that for any $\epsilon,\eta>0$, there exists $\delta>0$ such that for sufficiently large $n$, 
        \begin{equation}\label{eq:nu-g-k-pr}
        \bbp\big[\textstyle\sup_{\rho_\mu((\mathbf{g}_x)_k,(\mathbf{g}_y)_k)<\delta}\big\{|\nu_n((\mathbf{g}_x)_k)-\nu_n((\mathbf{g}_y)_k)|\big\}>\eta\big]<\epsilon.        \end{equation}
        Notice that
        $$
        \begin{aligned}
            \rho_\mu((\mathbf{g}_x)_k,(\mathbf{g}_y)_k)\le&\bigg(\int_\calm\big((\mathbf{g}_x)_k(\xi)-(\mathbf{g}_y)_k(\xi)\big)^2\dmu(\xi)\bigg)^{\frac{1}{2}}\\ 
            \le&\|(\mathbf{g}_x)_k-(\mathbf{g}_y)_k\|_\infty 
            \le L_k\dm(x,y).
        \end{aligned}
        $$
        For each $n$, define the sets
        $$
        \begin{aligned}
            &A_n=\{\rho_\mu((\mathbf{g}_{x^t_n})_k,(\mathbf{g}_{x^t_\star})_k)<\delta\},\\
            &B_n=\big\{\textstyle\sup_{\rho_\mu((\mathbf{g}_x)_k,(\mathbf{g}_y)_k)<\delta}\big\{|\nu_n((\mathbf{g}_x)_k)-\nu_n((\mathbf{g}_y)_k)|\big\}\le\eta\big\}.
        \end{aligned}
        $$
        By \Cref{thm:ulln-vf-mean-var}, $\dm(x^t_n,x^t_\star)\asto 0$, hence for sufficiently large $n$, $\bbp[A_n]\ge1-\epsilon$. By \cref{eq:nu-g-k-pr}, we have $\bbp[B_n]\ge 1-\epsilon$ for sufficiently large $n$. Therefore,
        $$
        \bbp[\{|\nu_n((\mathbf{g}_{x^t_n})_k)-\nu_n((\mathbf{g}_{x^t_\star})_k)|\le\eta\}]\ge\bbp[A_n\cap B_n]\ge 1-2\epsilon.
        $$
        We obtain that for sufficiently large $n$,
        $$
        \bbp[\{|\nu_n((\mathbf{g}_{x^t_n})_k)-\nu_n((\mathbf{g}_{x^t_\star})_k)|>\eta\}]\le 2\epsilon,
        $$
        which is equivalent to \cref{eq:nu-g-k-continuous}.
    \item[(III)] Since $\rmr(x^t_n,x^t_\star)=o_\bbp(\dm(x^t_n,x^t_\star))=o_\bbp(\|\rlog{x^t_\star}{x^t_n}\|)$, it follows that $(\mathrm{III})=o_\bbp(\sqn\,\|\rlog{x^t_\star}{x^t_n}\|)$. From the previous analyses of (I) and (II), we have
    $$
    \sqn\,\|\rlog{x^t_\star}{x^t_n}\| = O_\bbp(1) + o_\bbp(1) + o_\bbp(\sqn\,\|\rlog{x^t_\star}{x^t_n}\|).
    $$
    Hence $\sqn\,\rlog{x^t_\star}{x^t_n}=O_\bbp(1)$ and $(\mathrm{III})=o_\bbp(1)$.
\end{itemize}

In summary, we have proved that $(\mathrm{II})+(\mathrm{III})\pto 0$, and thus $\sqn\,\rlog{x^t_\star}{x^t_n}\wto \caln(0,\Sigma^t)$,
where $\Sigma^t$ is given as in (I).

\end{proof}

\subsection{Proofs of \Cref{sec:small-time}}
\begin{proof}[Proof of \Cref{prop:gradient-converge}]
    By \Cref{thm:log-estimate} (ii), for any $x\notin \mathrm{Cut}_\xi$, we have $\grad_x(\bms^t_\xi)\to\grad_x(\bms^0_\xi)$. Since $x\in\mathrm{Cut}_\xi$ if and only if $\xi\in\mathrm{Cut}_x$, it follows that $\grad_x(\bms^t_\xi)\to\grad_x(\bms^0_\xi)$ for all $\xi\notin \mathrm{Cut}_x$. By \Cref{thm:log-estimate} (i), the norm of $\grad_x(\bms^t_\xi)$ is uniformly bounded, therefore by Lebesgue's dominated convergence theorem, we have
    \begin{align}
        \lim_{t\to 0^+}\grad_x(F^t_\mu) &= \lim_{t\to 0^+}\int_\calm \grad_x(\bms^t_\xi)\diff{\mu}(\xi)\tag{by smoothness of $\cost^t(x,y)$ }\\
        &= \lim_{t\to 0^+}\int_{\calm\backslash\mathrm{Cut}_x} \grad_x(\bms^t_\xi)\diff{\mu}(\xi)\tag{by assumption $\mu\ll\vol_\calm$}\\
        &= \int_{\calm\backslash\mathrm{Cut}_x} \lim_{t\to 0^+}\grad_x(\bms^t_\xi)\diff{\mu}(\xi)\tag{by dominated convergence}\\
        &= \int_{\calm\backslash\mathrm{Cut}_x} \grad_x(\bms^0_\xi)\diff{\mu}(\xi)\tag{by \Cref{thm:log-estimate}}\\
        &= \grad_x(F^0_\mu) \notag,  
    \end{align}
    which proves the claim.
\end{proof}

\begin{proof}[Proof of \Cref{prop:hessian-converge}]
   On the compact set $\calm\backslash\calc_\delta(x)\subseteq\calm\backslash\mathrm{Cut}_x$, by \Cref{lemma:derivative-convergence}, we have uniform convergence $\hess_x(\bms^t_\xi)\to\hess_x(\bms^0_\xi)$ as $t\to 0^+$. Therefore,
   \begin{equation*}
       \int_{\calm\backslash\calc_\delta(x)}\hess_x(\bms^t_\xi)\diff{\mu}(\xi)\to \int_{\calm\backslash\calc_\delta(x)}\hess_x(\bms^0_\xi)\diff{\mu}(\xi), \text{ as } t\to 0^+.
   \end{equation*}
   It follows that
   \begin{equation}\label{eq:lim-t}
       \begin{aligned}
           & \varlimsup_{t\to 0^+}\hess_x(F^t_\mu) = \int_{\calm\backslash\calc_\delta(x)}\hess_x(\bms^0_\xi)\diff{\mu}(\xi)+ \varlimsup_{t\to 0^+} J_\mu^{t,\delta}(x),\\
           & \varliminf_{t\to 0^+}\hess_x(F^t_\mu) = \int_{\calm\backslash\calc_\delta(x)}\hess_x(\bms^0_\xi)\diff{\mu}(\xi)+ \varliminf_{t\to 0^+} J_\mu^{t,\delta}(x).
       \end{aligned}
   \end{equation}
   Since $\xi\mapsto \hess_x(\bms^0_\xi)$ is $\mu$-integrable, by taking $\delta\to 0^+$ on both sides of \eqref{eq:lim-t}, we have
      \begin{equation}\label{eq:lim-delta}
       \begin{aligned}
           & \varlimsup_{t\to 0^+}\hess_x(F^t_\mu) = \bbe[\hess_x(\bms^0_\Xi)]+ \lim_{\delta\to 0^+}\varlimsup_{t\to 0^+} J_\mu^{t,\delta}(x),\\
           & \varliminf_{t\to 0^+}\hess_x(F^t_\mu) = \bbe[\hess_x(\bms^0_\Xi)]+ \lim_{\delta\to 0^+}\varliminf_{t\to 0^+} J_\mu^{t,\delta}(x).
       \end{aligned}
   \end{equation}
   Hence $\displaystyle\lim_{t\to 0^+}\hess(F^t_\mu)$ exists if and only if \eqref{eq:limitJ} holds.
\end{proof}

\begin{proof}[Proof of \Cref{thm:clt-fre-mean}]
For any $x\in U$, let $\gamma;[0,1]\to\calm$ be the unique geodesic connecting $x^0_\star$ and $x$, we have
    \begin{equation}\label{eq:int-grad}
    \begin{aligned}
       \Pi_{x\to x^0_\star}\big(\grad_x(F^t_\mu)\big)=&\Pi_{\gamma(1)\to x^0_\star}\big(\grad_x(F^t_\mu)\big)\\
       =&\grad_{x^0_\star}(F^t_\mu)+\int_0^1\frac{\mathrm{d}}{\mathrm{d}s}\bigg(\Pi_{\gamma(s)\to x^0_\star}\big(\grad_{\gamma(s)}(F^t_\mu)\big)\bigg)\diff{s}\\
       =&\grad_{x^0_\star}(F^t_\mu)+\int_0^1\Pi_{\gamma(s)\to x^0_\star}\big(\hess_{\gamma(s)}(F^t_\mu)\gamma'(s)\big)\diff{s}.
    \end{aligned}
    \end{equation}
By assumption that $\hess_x(F^t_\mu)$ converges uniformly over $U$ as $t\to 0^+$, \eqref{eq:int-grad} and \Cref{prop:gradient-converge} imply that $\grad_x(F^t_\mu)$ also converges uniformly over $U$ as $t\to 0^+$. Together with \Cref{prop:int-vf} (i), it implies that $F^t_\mu$ converges in $C^2(U)$-norm to $F^0_\mu$ as $t\to 0^+$. Since $C^2(U)$ is a Banach space, it follows that $F^0_\mu\in C^2(U)$ and $\hess_x(F^0_\mu)=\displaystyle\lim_{t\to 0^+}\hess_x(F^t_\mu)$ for any $x\in U$. In particular, by \Cref{prop:hessian-converge}, we have
\begin{equation*}
     \hess_{x^0_\star}(F^0_\mu)=\bbe[\hess_{x^0_\star}(\bms^0_\Xi)]+J_\mu(x^0_\star).
\end{equation*}
By \Cref{prop:int-vf} (iii) $x^t_\star\to x^0_\star$ as $t\to 0^+$, thus we also have $\hess_{x^0_\star}(F^0_\mu)=\displaystyle\lim_{t\to 0^+}\hess_{x^t_\star}(F^t_\mu)$.

The proof of CLT is the same as \Cref{thm:clt-vf-mean} by setting $t=0$. The convergence of covariance matrices follows from the convergence of gradients and Hessians of $t$-Varadhan funtions.
\end{proof}
\end{appendices}

\end{document}